\documentclass[pdflatex,sn-mathphys-ay]{sn-jnl}

\usepackage{graphicx}%
\usepackage{multirow}%
\usepackage{amsmath,amssymb,amsfonts}%
\usepackage{amsthm}%
\usepackage{mathrsfs}%
\usepackage[title]{appendix}%
\usepackage{xcolor}%
\usepackage{textcomp}%
\usepackage{manyfoot}%
\usepackage{booktabs}%
\usepackage{algorithm}%
\usepackage{algorithmicx}%
\usepackage{algpseudocode}%
\usepackage{listings}%
\newtheorem{specialassumption}{}

\newtheorem{assumptions}{}

\usepackage{dsfont}
\usepackage{lscape}

\theoremstyle{thmstyleone}%
\newtheorem{theorem}{Theorem}
\theoremstyle{thmstyletwo}%
\newtheorem{example}{Example}%
\newtheorem{remark}{Remark}%
\newtheorem{corollary}{Corollary}%
\newtheorem{lemma}{Lemma}%

\theoremstyle{thmstylethree}%
\newtheorem{definition}{Definition}%

\begin{document}

\title[Article Title]{Model-assisted estimation of the finite population mean via local linear regression with associated kernels}


\author*[1]{\fnm{Sobom M.} \sur{Somé}}\email{sobom.some@uts.bf}

\author[1]{\fnm{Idrissa} \sur{Dimzouré}}\email{dimdimbao@gmail.com}


\affil[1]{\orgdiv{Laboratoire Sciences et Technologie}, \orgname{Université Thomas SANKARA }, \orgaddress{\street{Ouagadougou 12}, \city{Ouagadougou}, \postcode{12 BP 417}, \state{Kadiogo}, \country{Burkina Faso}}}


\abstract{In this paper, we propose a general regression-assisted estimator of the population mean based on local regression with associated kernels, a broad class that encompasses both classical (symmetric) kernels, such as the Gaussian and Epanechnikov kernels, and asymmetric kernels, such as the gamma kernel. We establish the asymptotic properties of the proposed estimator, including explicit expressions for the bias, mean squared error, and asymptotic normality, under a unified framework that applies to any kernel in this class. Through simulation studies on nonnegative data and one real-data application, we compare the finite-sample performance of the estimator based on the gamma kernel with those relying on the Gaussian and Epanechnikov kernels. When the data are supported on the positive real line, symmetric kernels suffer from boundary bias near zero due to mass assignment outside the support, whereas the gamma kernel, naturally adapted to $[0,\infty)$, alleviates this effect. The results confirm that the choice of kernel within the associated matters in practice, and that asymmetric kernels provide a reliable and efficient alternative when the underlying regression function is supported on the positive real line or exhibits marked asymmetry.}

\keywords{Associated kernels, boundary bias, finite population
inference, local linear regression,
model-assisted estimation, survey sampling}


\pacs[MSC Classification]{62G05, 62G07, 62G08, 62G20, 62D05}

\maketitle

\section{Introduction}\label{sec:Intro}	

Survey estimation methods have evolved considerably over the past decades with the aim of improving the accuracy and reliability of population parameter estimates such as totals or means. The well-known Horvitz--Thompson (HT) estimator \citep{horvitz1952generalization} provides a design-unbiased estimate of the population mean. However, its efficiency can often be improved by incorporating auxiliary information and modeling the relationship between the survey variable and auxiliary variables. In this context, the model-assisted approach \citep{sarnndal1980, robinson1983asymptotic, sarnndalwright1984, sarnndal1992model} has proven particularly effective by combining information from the sampling design with a working model of the relationship between the variable of interest and auxiliary variables. This framework has led to the development of increasingly efficient estimators \citep{breidt2000local, breidt2017model}.

A number of estimators have been developed within the model-assisted framework. The generalized regression estimator (GREG) \citep{cassel1977foundations} is among the most widely used. Other important developments include calibration estimators \citep{deville1992calibration}, empirical likelihood approaches \citep{chen1993empirical}, and several nonparametric regression-based estimators, including local regression \citep{breidt2000local, sanchez2014nonparametric}, spline-based methods \citep{breidt2005model, goga2014efficient}, additive models \citep{wang2011nonparametric}, and other regression approaches \citep{fuller2002regression, opsomer2005selecting, breidt2017model, liu2018geographically, mostafa2019finite}. More recently, modern statistical learning and Bayesian ideas have been incorporated into survey estimation. \citet{pohjankukka2024model} proposed a Bayesian framework for model-assisted survey modeling, while \citet{dagdoug2023model} employed random forests within an assisted estimation setting, and \citet{kwon2025debiased} introduced debiased calibration techniques based on generalized entropy.

Most existing nonparametric regression estimators in the survey sampling literature rely on classical symmetric kernels such as the Gaussian or Epanechnikov kernels. While these methods perform well when the auxiliary variable is approximately symmetrically distributed and supported on the real line , their performance can deteriorate when the support is bounded or restricted to the positive real line. In such settings, symmetric kernels assign positive mass outside the support of the data near the boundary, introducing a boundary bias that can substantially distort estimation accuracy, see for examples in density regression. A natural and principled remedy is the use of associated kernels \citep{somes2024}, a broad class that encompasses both classical symmetric kernels and asymmetric kernels, and whose members adapt their shape and support to the location of the estimation point \citet{stats4010013}. \citet{chen1999beta, chen2000beta} and \citet{chen2002local} were among the first to investigate asymmetric kernels for respectively probability density function estimation (pdf) and local linear regression on bounded supports. \citet{fe2014estimation} and \citet{genest2025local} highlighted the local in regression gamma dirichlet ability of these kernels to reduce boundary bias effectively on the support. \citet{stats4010013} provided further developments in density estimation using associated kernels, and \citet{Yue2026} demonstrated that asymmetric kernels can improve estimation accuracy by better accommodating the support structure of the data.

The objective of this paper is to propose a general model-assisted estimator of the finite population mean based on local linear regression with associated kernels. The asymptotic properties of the proposed estimator, including bias, mean squared error, and asymptotic normality, are derived theoretically under a unified framework that applies to any kernel in the associated class. This framework encompasses classical symmetric kernels such as those used in \citet{breidt2000local} as special cases, while also accommodating asymmetric kernels that are better adapted to bounded or positive supports. In particular, we investigate the use of the gamma kernel \citep{chen2002local} in local regression, which is naturally supported on $[0, \infty)$ and possesses attractive smoothing properties for positive and asymmetric data. We also defined bias and variance of the regression functions. The performance of the proposed estimator is evaluated through simulation studies and one real-data application, where comparisons are conducted between the gamma-kernel-based model assisted estimator and estimators based on the Epanechnikov and Gaussian kernels, as well as the HT estimator. The comparative study illustrates the practical consequences of kernel choice within the associated class, and shows that asymmetric kernels such as the gamma kernel offer efficiency gains when the support of the auxiliary variable departs from the real line.

The remainder of the paper is organized as follows. Section~\ref{sec:EL1} presents the proposed estimator and derives its asymptotic properties under the general associated kernel framework. Section~\ref{sec:ELLL} reports simulation studies and real-data application comparing the proposed approach with existing estimators under simple random sampling design without replacement (SRSWOR). Section~\ref{sec:Conclusion} concludes with a discussion and directions for future research, while technical proofs are collected in the Appendix.

\section{Model-assisted estimation using local linear regression with associated kernels}\label{sec:EL1}

Consider a finite population $U = \{1, \ldots, i, \ldots, N\}$ of size $N$ from which a sample $s$ of size $n$ is drawn according to a fixed-size sampling design. Let $\mathbb{P}(s)$ denote the probability of selecting sample $s$, $\pi_i = \mathbb{P}(i \in s)$ the first-order inclusion probability of unit $i$, and $\pi_{ij} = \mathbb{P}(i, j \in s)$ the joint inclusion probability of units $i$ and $j$. The goal is to estimate the finite population mean $ \overline{y} = N^{-1}\sum_{i \in U} y_i$. Let $\mathds{1}_i$ denote the sample membership indicator of unit $i$, equal to 1 if $i \in s$ and 0 otherwise, so that $\mathbb{E}_p(\mathds{1}_i) = \pi_i$, where $\mathbb{E}_p(\cdot)$ denotes expectation with respect to the sampling design. The HT estimator \citep{horvitz1952generalization} is given by
\begin{equation}
	\widehat{\overline{y}}_{\mathrm{HT}} = \frac{1}{N}\sum_{i \in s}^{}\frac{y_i}{\pi_i} = \frac{1}{N}\sum_{i \in U}\frac{y_i}{\pi_i}\mathds{1}_i.
	\label{bar_y_HT}
\end{equation}

To improve the efficiency of \eqref{bar_y_HT}, we assume that the study variable $y$ and the auxiliary variable $x$ are related through the superpopulation model
\begin{equation}
	\xi\ :\ y_i = m(x_i) + \epsilon_i, \quad i \in U,
	\label{xi}
\end{equation}
where $m(x_i) = \mathbb{E}_{\xi}(y_i \mid x_i)$ is the unknown regression function, $(x_i, y_i)$ are the observed values of $(x, y)$ for unit $i$, and the $\epsilon_i$ are independent random variables satisfying $\mathbb{E}_\xi(\epsilon_i \mid x_i) = 0$ and $\mathrm{Var}_\xi(\epsilon_i \mid x_i) = \sigma^2(x_i)$. Since $m(\cdot)$ is unknown, we estimate it nonparametrically using local linear regression with associated kernels, as described in the following subsections.

\subsection{Method}\label{subsec:method}

To estimate the finite population mean, we use local linear regression with associated kernels in the survey sampling framework. We begin by recalling the definition of associated kernels.

\begin{definition}[\citealp{esstafa2026properties}]
Let $\mathbb{T} \subseteq \mathbb{R}$ be the support of the pdf $f$ to be estimated, $x_j \in \mathbb{T}$ a target point, and $h>0$ a bandwidth. A parametrized pdf $K_{x_j,h}(\cdot)$ on the support $\mathbb{S}_{x_j,h} \subseteq \mathbb{R}$ is called a \textit{continuous associated kernel} if the following conditions hold:
	\begin{equation}
		x_j \in \mathbb{S}_{x_j,h}, \quad
		\mathbb{E}(\mathcal{Z}_{x_j,h}) - x_j =: A(x_j,h) \xrightarrow[h \to 0]{} 0
		\quad \text{and} \quad
		\operatorname{Var}(\mathcal{Z}_{x_j,h}) =: B(x_j,h) \xrightarrow[h \to 0]{} 0,
		\label{definition:associated_kernel}
	\end{equation}
	where $\mathcal{Z}_{x_j,h}$ denotes the continuous random variable with pdf $K_{x_j,h}(\cdot)$.
\end{definition}

To construct the local linear estimator, we introduce the following matrices defined over the finite population $U$:
\[
Y_U = \left[y_i\right]_{i \in U}, \quad
\mathcal{X}_U = \begin{bmatrix} 1 & x_i - x_j \end{bmatrix}_{i \in U}, \quad
\mathcal{W}_U = \mathrm{diag}\left\{K_{x_j, h}(x_i)\right\}_{i \in U}.
\]
The local linear estimator of $m(x_j)$ based on the entire population, centered at $x_j$, is given by
\begin{align}
	m_j = e_1^\top \left(\mathcal{X}_U^\top \mathcal{W}_U \mathcal{X}_U\right)^{-1} \mathcal{X}_U^\top \mathcal{W}_U Y_U,
	\label{m_j}
\end{align}
where $e_1 = (1, 0)^\top$. Since $m(\cdot)$ is unknown in practice and the full population is not observed, $m_j$ serves as the theoretical benchmark from which the sample-based estimator is derived.

A design-unbiased estimator of the population mean can be obtained through the generalized difference estimator \citep{cassel1977foundations, sarnndal1992model}, which reduces variance by exploiting the residuals $y_j - m_j$ rather than the raw values $y_j$:
\begin{equation}
	\overline{y}^{\ast} = \frac{1}{N}\sum_{j \in s}\frac{y_j - m_j}{\pi_j} + \frac{1}{N}\sum_{j \in U} m_j.
	\label{y*}
\end{equation}
The design variance of \eqref{y*} is
\begin{equation}
	\mathrm{Var}_p\left(\overline{y}^{\ast}\right) = \frac{1}{N^2}\sum_{i \in U}^{}\sum_{j \in U}^{}\left(\pi_{ij} - \pi_i \pi_j\right)\frac{y_i - m_i}{\pi_i}\frac{y_j - m_j}{\pi_j}.
	\label{var_y_bar*}
\end{equation}

In practice, $m_j$ is replaced by its sample-based estimate $\widehat{m}_j = \widehat{\beta}_0$, where $\widehat{\beta} = (\widehat{\beta}_0, \widehat{\beta}_1)^\top$ is obtained by solving the weighted least squares problem
\begin{equation}
	\arg\min_{\beta} \sum_{i=1}^{n} \frac{1}{\pi_i}\left\{y_i - \beta_0 - \beta_1(x_i - x_j)\right\}^2 K_{x_j, h}(x_i).
	\label{eq:WLS}
\end{equation}
The inclusion probability weights $1/\pi_i$ account for the unequal selection probabilities of the sampling design. Defining the sample-level matrices
\[
Y_s = \left[y_i\right]_{i \in s}, \quad
\mathcal{X}_s = \begin{bmatrix} 1 & x_i - x_j \end{bmatrix}_{i \in s}, \quad
\mathcal{W}_s = \mathrm{diag}\left\{\frac{1}{\pi_i} K_{x_j, h}(x_i)\right\}_{i \in s},
\]
the local linear regression estimator based on the sample is
\begin{align}
	\widehat{m}_j = e_1^\top \left(\mathcal{X}_s^\top \mathcal{W}_s \mathcal{X}_s\right)^{-1} \mathcal{X}_s^\top \mathcal{W}_s Y_s = w_s Y_s.
	\label{m^(x)}
\end{align}

Let $\mathcal{I}$ be a compact subset of $\mathbb{S}_{x_j,h}$, and let $K_{x_j,h}$ be a continuous associated kernel in the sense of Definition~\ref{definition:associated_kernel}. We denote the centred moments
\begin{equation}\label{Akxh}
	A_k(x_j,h) = \mathbb{E}\left\{(\mathcal{Z}_{x_j,h}-x_j)^k \right\}, \quad k \geq 0,
\end{equation}
so that $A_0(x_j,h) = 1$, $A_1(x_j,h) = A(x_j,h)$, and $A_2(x_j,h) = B(x_j,h) + A(x_j,h)^2$ by the variance decomposition. We impose the following regularity conditions.

\begin{specialassumption}
	For all $i \in s$, $\pi_i = n/N$, where $N$ is the size of the finite population. The general estimator theory in Section~\ref{subsec:asymptotics} holds under the weaker conditions of Assumptions~\ref{B1}--\ref{B7}.
	\label{A1}
\end{specialassumption}

\begin{specialassumption}
	$m^{(2)} \in \mathcal{C}(\mathbb{S}_{x_j,h})$, and both $f(\cdot)$ and $\sigma^2(\cdot)$ are twice continuously differentiable on $\mathbb{S}_{x_j,h}$.
	\label{A2}
\end{specialassumption}

\begin{specialassumption}
	$f(x_j) \geq f_c > 0$ and $\sigma^2(x_j) \leq \sigma_c^2$ for all $x_j \in \mathcal{I}$.
	\label{A3}
\end{specialassumption}

\begin{specialassumption}
	There exists $r_2 = r_2(K_{x_j,h}) > 0$ and $0 \leq c_2(x_j) < \infty$ such that
	\[
	\|K_{x_j,h}\|_2^2 = \int_{\mathbb{S}_{x_j,h} \cap \mathbb{T}}^{} K_{x_j,h}^2(s)\,ds \leq c_2(x_j)\,h^{-r_2}.
	\]
	\label{A4}
\end{specialassumption}

\begin{specialassumption}
	$h \to 0$ and $nh^{r_2} \to \infty$ as $n \to \infty$.
	\label{A5}
\end{specialassumption}

\begin{specialassumption}
	The bias function of the associated kernel satisfies $A(x_j,h)^2 = o\left\{B(x_j,h)\right\}$ as $h \to 0$, so that
	\[
	A_2(x_j,h) = B(x_j,h)\{1+o(1)\}
	\quad\text{and}\quad
	A_1(x_j,h) = o\!\left\{A_2(x_j,h)^{1/2}\right\}.
	\]
	\label{A6}
\end{specialassumption}

\begin{remark}
	Assumptions~\ref{A1} clarifies the role of the inclusion probabilities $\pi_i$ within the proof of Theorem~\ref{thrm} and Assumptions~\ref{B1}--\ref{B7}. Assumption~\ref{A2}--\ref{A3} were used in \citet{chen2002local} in the context of local linear smoothing, while \ref{A4} and~\ref{A5} were used in \citet{kokonendji2018, somes2024}. Assumption~\ref{A6} is an additional assumption that involves associated kernels.
\end{remark}

\begin{theorem}\label{thrm}
	Under Assumptions~\textnormal{\ref{A1}--\ref{A6}}, for $x_j \in \mathcal{I} \subset \mathbb{S}_{x_j,h}$, the estimator~\eqref{m^(x)} has the following asymptotic conditional bias and variance as $n \to \infty$:
	\begin{align}
		\operatorname{Bias}\!\left(\widehat{m}_j \mid x\right)
		= \frac{1}{2}\,m^{(2)}(x_j)\,A_2(x_j,h)
		+ o\!\left\{A_2(x_j,h)\right\}
		+ \mathcal{O}\!\left(\frac{1}{nh^{r_2}}\right),
		\label{Bias_m}
	\end{align}
	where $A_2(x_j,h) = \mathbb{E}\left\{(\mathcal{Z}_{x_j,h}-x_j)^2\right\} = B(x_j,h) + A(x_j,h)^2$, and
	\begin{align}
		\operatorname{Var}\!\left(\widehat{m}_j \mid x\right)
		= \frac{\sigma^2(x_j)}{n\,f(x_j)}\,\|K_{x_j,h}\|_2^2
		+ o\!\left(\frac{1}{nh^{r_2}}\right).
		\label{Var_m}
	\end{align}
\end{theorem}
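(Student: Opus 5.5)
Under Assumption~\ref{A1} the weights $1/\pi_i=N/n$ are constant, so they cancel in \eqref{m^(x)}. Hence $\widehat{m}_j$ is the ordinary local linear smoother with associated kernel $K_{x_j,h}$ computed on the sample $s$. I would therefore follow the route of \citet{chen2002local}, replacing the gamma-kernel moments by the generic centred moments $A_k(x_j,h)$ of \eqref{Akxh}. Write
\[
S_k=\frac{1}{n}\sum_{i\in s}(x_i-x_j)^kK_{x_j,h}(x_i),\qquad k=0,1,2,
\]
\[
T_k=\frac{1}{n}\sum_{i\in s}(x_i-x_j)^kK_{x_j,h}(x_i)\,y_i .
\]
Then
\[
\widehat{m}_j=\frac{S_2T_0-S_1T_1}{S_0S_2-S_1^2}.
\]
The first step is to obtain the asymptotics of $S_k$. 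Since $K_{x_j,h}$ is a pdf, $\mathbb{E}(S_k)=\int (u-x_j)^kK_{x_j,h}(u)f(u)\,du=\mathbb{E}\{(\mathcal{Z}_{x_j,h}-x_j)^kf(\mathcal{Z}_{x_j,h})\}$. A Taylor expansion of $f$ around $x_j$ (Assumption~\ref{A2}) then gives $\mathbb{E}S_0=f(x_j)+\mathcal{O}(A_2)$, $\mathbb{E}S_1=f(x_j)A_1+f'(x_j)A_2+o(A_2)$ and $\mathbb{E}S_2=f(x_j)A_2\{1+o(1)\}$. The variances of the $S_k$ are bounded by $\|K_{x_j,h}\|_2^2/n$ times powers of $A_2$, so by Assumptions~\ref{A4}--\ref{A5} the fluctuation of each $S_k$ is $\mathcal{O}_p\{(nh^{r_2})^{-1/2}\}$ relative to its mean.

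For the bias I would use the reproducing property of local linear weights: $\sum_i w_i=1$ and $\sum_i w_i(x_i-x_j)=0$. Consequently
\[
\mathbb{E}(\widehat{m}_j\mid x)-m(x_j)=\sum_i w_i\{m(x_i)-m(x_j)-m'(x_j)(x_i-x_j)\}=\tfrac12 m^{(2)}(x_j)\,\frac{S_2^2-S_1S_3}{S_0S_2-S_1^2}+o(A_2).
\]
I would then substitute the expansions of the $S_k$. Assumption~\ref{A6} ($A_1=o(A_2^{1/2})$, $A_2\sim B$) is exactly what makes $S_1^2=o(S_0S_2)$, and it should also make the $S_1S_3$ term negligible, giving the ratio $A_2\{1+o(1)\}$. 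Replacing the random $S_k$ by their means costs a relative error of order $(nh^{r_2})^{-1}$ after taking conditional expectation, which produces the $\mathcal{O}\{1/(nh^{r_2})\}$ term in \eqref{Bias_m}.

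For the variance,
\[
\operatorname{Var}(\widehat{m}_j\mid x)=\sum_i w_i^2\sigma^2(x_i),
\]
and to leading order $w_i\approx K_{x_j,h}(x_i)/(nS_0)$, because the slope correction involves $S_1/S_2$ and is of smaller order by Assumption~\ref{A6}. Hence
\[
\operatorname{Var}(\widehat{m}_j\mid x)\approx \frac{1}{n^2f(x_j)^2}\sum_i K_{x_j,h}^2(x_i)\sigma^2(x_i).
\]
Its expectation is $n^{-1}f(x_j)^{-2}\int K_{x_j,h}^2(u)\sigma^2(u)f(u)\,du$. Continuity of $\sigma^2 f$ (Assumptions~\ref{A2}--\ref{A3}) and the concentration of $K^2_{x_j,h}$ at $x_j$, a consequence of $A,B\to0$, reduce this to $\sigma^2(x_j)\|K_{x_j,h}\|_2^2/\{nf(x_j)\}+o\{1/(nh^{r_2})\}$ by Assumption~\ref{A4}.

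The main obstacle is the absence of a scaled-kernel structure. For a generic associated kernel we cannot write $K_{x_j,h}(u)=h^{-1}K\{(u-x_j)/h\}$, so two things must be controlled purely through the moments $A_k$ and the bound $\|K_{x_j,h}\|_2^2\le c_2h^{-r_2}$. First, I need the third moment $A_3=o(A_2^{3/2})$, or at least $o(A_2)$ in the bias ratio. I would try to get it from a Cauchy--Schwarz bound together with the restriction to the compact set $\mathcal{I}$, and otherwise state it as an implicit uniformity condition. Second, I need the localization $\int K^2_{x_j,h}(u)\{g(u)-g(x_j)\}\,du=o(\|K_{x_j,h}\|_2^2)$ for continuous $g$. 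For this I would split the integral into $|u-x_j|\le\delta$ and its complement, and bound the tail by Chebyshev's inequality using $A_2\to0$.
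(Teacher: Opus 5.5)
Your outline is the paper's own argument. It computes the expected $S_k$ through $\mathbb{E}\{(\mathcal{Z}_{x_j,h}-x_j)^k f(\mathcal{Z}_{x_j,h})\}$ with a Taylor expansion of $f$, uses a nearly diagonal design matrix and a ratio expansion for the fluctuations, and gets the variance from a $K^2_{x_j,h}$-weighted sum. The two obstacles you flag are real, and the paper does not resolve them either:
\begin{itemize}
\item it reaches $S_{n2}/S_{n0}$ in \eqref{bias_ratio} only by replacing $(\mathcal{X}_s^\top\mathcal{W}_s\mathcal{X}_s)^{-1}$ with a diagonal matrix, which silently drops your $S_1S_3$ term (and, in Step~5, the slope correction $-(S_1/S_2)(x_i-x_j)$ in the weights);
\item it controls $\sigma^2(x_i)-\sigma^2(x_j)$ by $LA_1(x_j,h)$, whereas the relevant quantity is $\int|t-x_j|K^2_{x_j,h}(t)\,dt/\|K_{x_j,h}\|_2^2$.
\end{itemize}

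The gap is that your proposed repairs do not work. If $|\mathcal{Z}_{x_j,h}-x_j|\le D$ (your compactness), Cauchy--Schwarz gives only $|A_3|\le(A_2A_4)^{1/2}\le DA_2$. Under Assumption~\ref{A6} you need $A_3=\mathcal{O}(A_2^{3/2})$ to get $A_1A_3=o(A_2^2)$, so even $A_3=o(A_2)$ would not suffice. Chebyshev bounds the $K_{x_j,h}$-mass of $\{|t-x_j|>\delta\}$ by $A_2/\delta^2$, but says nothing about its $K^2_{x_j,h}$-mass. Nor does it make $\mathbb{E}\{(\mathcal{Z}_{x_j,h}-x_j)^2\mathds{1}_{\{|\mathcal{Z}_{x_j,h}-x_j|>\delta\}}\}=o(A_2)$, which you tacitly need for the remainder $\sum_i w_i\,o\{(x_i-x_j)^2\}$ in your bias formula to be $o(A_2)$.

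These properties do not follow from Assumptions~\ref{A1}--\ref{A6}, and without them the statement fails. Let $\mathcal{Z}_{x_j,h}\sim(1-h^2)\,\mathcal{U}[x_j-h,x_j+h]+h^2\,\mathcal{U}[x_j+1-h,x_j+1+h]$. Then $A_1=h^2$, $A_2=4h^2/3$, $A_3\sim h^2$ and $\|K_{x_j,h}\|_2^2\sim 1/(2h)$, so Assumptions~\ref{A4}--\ref{A6} hold with $r_2=1$. Take $m(t)=t^2/2$ and $f$ constant on $[x_j-h,x_j+1+h]$. Your exact expression then gives a conditional bias of $\tfrac12(A_2^2-A_1A_3)/(A_2-A_1^2)\{1+o_p(1)\}=\tfrac{7}{24}h^2\{1+o_p(1)\}$. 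This is not $\tfrac12 m^{(2)}(x_j)A_2=\tfrac23h^2$, and the difference is not absorbed by the $\mathcal{O}(1/(nh))$ term when $nh^3\to\infty$. Likewise, narrowing the remote component to width $h^5$ keeps Assumptions~\ref{A4}--\ref{A6} intact but makes it carry two thirds of $\|K_{x_j,h}\|_2^2$.

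So you must add kernel conditions, for example:
\begin{itemize}
\item $A_4=\mathcal{O}(A_2^2)$, which gives $A_3=\mathcal{O}(A_2^{3/2})$ by your Cauchy--Schwarz step and the tail condition via $A_4/\delta^2$;
\item $\int(t-x_j)^{2k}K^2_{x_j,h}(t)\,dt=\mathcal{O}\{A_2^k\|K_{x_j,h}\|_2^2\}$ for $k=1,2$, which gives your claimed bounds on $\operatorname{Var}(S_k)$, the $K^2$-localization, and negligibility of the slope correction. Assumption~\ref{A6} alone only yields $S_1/S_2=o(A_2^{-1/2})+\mathcal{O}(1)$.
\end{itemize}
Both conditions hold for rescaled symmetric kernels and for the gamma kernel at $x_j>0$.

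Finally, conditionally on $x$ the $S_k$ are not random. Their design fluctuation enters the conditional bias as a relative error $\mathcal{O}_p\{(nh^{r_2})^{-1/2}\}$, already $o_p(A_2)$ by Assumption~\ref{A5}. The $\mathcal{O}\{1/(nh^{r_2})\}$ term in \eqref{Bias_m} is therefore harmless, but it does not arise from ``taking conditional expectation'' as you describe; a second-order $(nh^{r_2})^{-1}$ term appears only after averaging over the design.
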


The following three examples illustrate the theorem for the kernels widely used in this paper, including two classical symmetric cases.

\begin{example}
	The following kernels satisfy Assumptions~\ref{A1}--\ref{A6}.
	\begin{enumerate}
		\item \textit{Gaussian kernel.} For $x_j \in \mathbb{T}=\mathbb{R}$ and $h>0$, the classical associated Gaussian kernel on $\mathbb{S}_{x_j,h}=\mathbb{R}$ is:
		\begin{equation}
			K^{\mathcal{N}}_{x_j, h}(t) = \frac{1}{h\sqrt{2\pi}}\exp\!\left\{-\frac{1}{2}\left(\frac{t - x_j}{h}\right)^2\right\}\mathds{1}_{\mathbb{R}}(t).
			\label{gaus}
		\end{equation}
		Under Assumptions~\ref{A4}--\ref{A6} with $r_2 = 1$, we have $A_2(x_j, h) = h^2$ and $\left\|K^{\mathcal{N}}_{x_j,h}\right\|_2^2 = 1/(h\sqrt{2\pi})$, so that
		\begin{align*}
			\mathrm{Bias}\left(\widehat{m}^{\mathcal{N}}_j \mid x\right) &= \frac{1}{2}m^{(2)}(x_j)\,h^2 + o(h^2) + \mathcal{O}\!\left(\frac{1}{nh}\right), \\
			\mathrm{Var}\left(\widehat{m}^{\mathcal{N}}_j \mid x\right) &= \frac{\sigma^2(x_j)}{nf(x_j)\,h\sqrt{2\pi}} + o\!\left(\frac{1}{nh}\right).
		\end{align*}
		See \citet{fan1992variable, fan1993local, fan1996gijbels} for further results on classical kernels in local estimation and \citet{dolele2017mode, kokonendji2018}.
		
		\item \textit{Epanechnikov kernel.} For $x_j \in \mathbb{T}=\mathbb{R}$ and $h>0$, the Epanechnikov kernel on $\mathbb{S}_{x_j,h}=[x_j-h,x_j+h]$ is given by
		\begin{equation}
			K^{\mathcal{E}}_{x_j,h}(t) = \frac{3}{4h}\left\{1 - \left(\frac{t - x_j}{h}\right)^2\right\} \mathds{1}_{\left|t-x_j\right| \leq h}.
			\label{epan}
		\end{equation}
		Under Assumptions~\ref{A4}--\ref{A6} with $r_2 = 1$, we have $A_2(x_j,h) = h^2/5$ and $\left\|K^{\mathcal{E}}_{x_j,h}\right\|_2^2 = 3/(5h)$, so that
		\begin{align*}
			\mathrm{Bias}\left(\widehat{m}^{\mathcal{E}}_j \mid x\right) &= \frac{1}{2}m^{(2)}(x_j)\frac{h^2}{5} + o(h^2) + \mathcal{O}\!\left(\frac{1}{nh}\right), \\
			\mathrm{Var}\left(\widehat{m}^{\mathcal{E}}_j \mid x\right) &= \frac{3\,\sigma^2(x_j)}{5\,n f(x_j)\,h} + o\!\left(\frac{1}{nh}\right).
		\end{align*}
		Note that when $x_j < h$, the support $[x_j - h, x_j + h]$ extends to negative values, so that this kernel is not appropriate when the auxiliary variable is restricted to $[0,\infty)$; see Section~\ref{sec:ELLL} for a discussion of the practical consequences.
		
		\item \textit{Gamma kernel.} For $x_j \geq 0$ and $h>0$, the gamma kernel on $\mathbb{S}_{x_j,h}=[0,\infty) = \mathbb{T}$ is given by
		\begin{equation}
			K^{\mathcal{G}}_{x_j, h}(t) = \frac{t^{x_j/h}}{\Gamma(1 + x_j/h)\,h^{1 + x_j/h}}\exp\!\left(\frac{-t}{h}\right)\mathds{1}_{[0,\infty)}(t),
			\label{Gam}
		\end{equation}
		where $\Gamma(\cdot)$ denotes the gamma function. This kernel corresponds to the density of a gamma $\mathcal{G}(x_j/h + 1,\, h)$ distribution. Under Assumptions~\ref{A4}--\ref{A6} with $r_2 = 1/2$ \citep{dolele2017mode}, we have $A_2(x_j, h) = h(x_j + 2h)$ and
		\[
		\left\|K^{\mathcal{G}}_{x_j,h}\right\|_2^2 = \frac{\Gamma(2x_j/h + 1)}{h\, 2^{2x_j/h + 1}\,\Gamma^2(x_j/h + 1)},
		\]
		so that for $x_j \geq 0$,
		\begin{align*}
			\mathrm{Bias}\left(\widehat{m}^{\mathcal{G}}_j \mid x\right) &= \frac{1}{2}m^{(2)}(x_j)\left\{h(x_j + 2h)\right\} + o\left\{h(x_j + 2h)\right\} + \mathcal{O}\!\left(\frac{1}{nh^{1/2}}\right), \\
			\mathrm{Var}\left(\widehat{m}^{\mathcal{G}}_j \mid x\right) &= \frac{\sigma^2(x_j)\,\Gamma(2x_j/h + 1)}{nf(x_j)\,h\,2^{2x_j/h+1}\,\Gamma^2(x_j/h+1)} + o\!\left(\frac{1}{nh^{1/2}}\right).
		\end{align*}
		We note that $o\{h(x_j + 2h)\} = o(h^2)$ when $x_j = 0$, and $o(h)$ for $x_j > 0$. These results extend those of \citet{chen2002local} to the survey sampling context: the key difference is the incorporation of design weights $1/\pi_i$ in the weighted least squares criterion \eqref{eq:WLS}, which modifies the finite-sample properties of the estimator while leaving the asymptotic bias and variance structure unchanged.
	\end{enumerate}
\end{example}

To ensure numerical stability when the sample $s$ is a strict subset of $U$, we use the ridge-adjusted estimator
\begin{equation}
	\widehat{m}^1_j = e_1^\top \left(\mathcal{X}_s^\top \mathcal{W}_s \mathcal{X}_s + \mathrm{diag}\!\left\{\frac{\delta}{N^2}\right\}_{i=1}^{2}\right)^{-1} \mathcal{X}_s^\top \mathcal{W}_s Y_s = w^1_s Y_s,
	\label{m^11_j}
\end{equation}
where $\delta > 0$ is a small regularization constant. Substituting \eqref{m^11_j} for \eqref{m_j} in \eqref{y*}, the model-assisted estimator of the finite population mean is
\begin{align}
	\widetilde{y}_{MA} &= \frac{1}{N}\sum_{j \in U}\widehat{m}^1_j + \frac{1}{N}\sum_{j \in s}\frac{y_j - \widehat{m}^1_j}{\pi_j} \label{y_tilde_MA}\\
	&= \frac{1}{N}\sum_{j \in s}\left\{\frac{1}{\pi_j} + \sum_{i \in U}\left(1 - \frac{\mathds{1}_i}{\pi_i} \right)w^1_s e_j\right\} y_j = \frac{1}{N}\sum_{j \in s} w_{js}\, y_j, \nonumber
\end{align}
where $w_{js}$ summarizes the contribution of the auxiliary variable to the estimation weight of unit $j$. The asymptotic properties of $\widetilde{y}_{MA}$ are established in the following subsection.

\subsection{Assumptions}\label{subsec:assumptions}

Assume that $m_j = f(\overline{y}_j, 0)$ and $\widehat{m}^1_j = f(\widehat{\overline{y}}_j, \delta)$, and let
\[
\overline{y}_j = \left[\overline{y}_{jg}\right]_{g=1}^{5} = \left[\frac{1}{N}\sum_{k \in U} K_{x_j,h}(x_k)\, z^+_{jgk}\right]_{g=1}^{5}, \]
\[ \widehat{\overline{y}}_j = \left[\widehat{\overline{y}}_{jg}\right]_{g=1}^{5} = \left[\frac{1}{N}\sum_{k \in U} K_{x_j,h}(x_k)\, z^+_{jgk}\, \frac{\mathds{1}_k}{\pi_k}\right]_{g=1}^{5},
\]
with
\[
z^+_{jgk} = \begin{cases}
	(x_k - x_j)^{g-1} & \text{if } g \leq 3, \\
	y_k(x_k - x_j)^{g-4} & \text{if } g > 3.
\end{cases}
\]
Therefore, $m_j$ and its sample-based counterpart are
\[
m_j = \frac{\overline{y}_{j3}\overline{y}_{j4} - \overline{y}_{j2}\overline{y}_{j5}}{\overline{y}_{j1}\overline{y}_{j3} - \overline{y}^2_{j2}}, \qquad
\widehat{m}_j = \frac{\widehat{\overline{y}}_{j3}\widehat{\overline{y}}_{j4} - \widehat{\overline{y}}_{j2}\widehat{\overline{y}}_{j5}}{\widehat{\overline{y}}_{j1}\widehat{\overline{y}}_{j3} - \widehat{\overline{y}}^2_{j2}}.
\]
For
\[
z_{jk} = \sum_{g=1}^{5} \frac{\partial \widehat{m}^1_j}{\partial(\widehat{\overline{y}}_{jg})}\bigg|_{\widehat{\overline{y}}_j = \overline{y}_j,\, \delta=0} K_{x_j,h}(x_k)\, z^+_{jgk},
\]
we have
\begin{equation}
	R_{jN} = \widehat{m}^1_j - m_j - \frac{1}{N}\sum_{k \in U} z_{jk}\left(\frac{\mathds{1}_k}{\pi_k} - 1\right) - \frac{\partial \widehat{m}^1_j}{\partial \delta}\bigg|_{\widehat{\overline{y}}_j = \overline{y}_j,\, \delta=0} \frac{\delta}{N^2}.
	\label{Rj}
\end{equation}

\begin{assumptions}\label{B1}
	The errors are independent, have zero mean, finite variance, and compact support, uniformly for all $N$.
\end{assumptions}

\begin{assumptions}\label{B2}
	The values $x_j$ are fixed with respect to the superpopulation model \eqref{xi}, and $f$ is the pdf with support $\mathcal{I}$, a compact subset of $\mathbb{S}_{x_j,h} \subseteq \mathbb{R}$, satisfying $f(x_j) > 0$ for all $x_j \in \mathbb{S}_{x_j,h}$.
\end{assumptions}

\begin{assumptions}\label{B3}
	The regression function $m$ is continuous with continuous second derivatives, and the variance function $\sigma^2(\cdot)$ is continuous and strictly positive.
\end{assumptions}

\begin{assumptions}\label{B4}
	The kernel $K_{x_j,h}$ is continuous and not necessarily symmetric, with support $\mathbb{S}_{x_j,h} \subseteq \mathbb{R}$. 
\end{assumptions}

\begin{assumptions}\label{B5}
	As $N \to \infty$, $n/N \to \pi \in (0,1)$ and $h \to 0$.
\end{assumptions}

\begin{assumptions}\label{B6}
	For all $N$, $\min_{j \in U} \pi_j \geq \lambda > 0$, $\min_{i,j \in U} \pi_{ij} \geq \lambda^* > 0$, and
	\[
	\limsup_{N \to \infty}\, n \max_{\substack{i,j \in U \\ i \neq j}} |\pi_{ij} - \pi_i\pi_j| < \infty.
	\]
\end{assumptions}

\begin{assumptions}\label{B7}
	The following moment conditions on the sampling indicators hold:
	\begin{align*}
		&\limsup_{N \to \infty}\, n^2 \max_{(j_1,j_2,j_3,j_4) \in D_{4,N}} \left|\mathbb{E}_p \left[(\mathds{1}_{j_1}-\pi_{j_1})(\mathds{1}_{j_2}-\pi_{j_2})(\mathds{1}_{j_3}-\pi_{j_3})(\mathds{1}_{j_4}-\pi_{j_4})\right]\right| < \infty, \\
		&\lim_{N \to \infty} \max_{(j_1,j_2,j_3,j_4) \in D_{4,N}} \left|\mathbb{E}_p \left[(\mathds{1}_{j_1}\mathds{1}_{j_2}-\pi_{j_1,j_2})(\mathds{1}_{j_3}\mathds{1}_{j_4}-\pi_{j_3,j_4})\right]\right| = 0, \\
		&\limsup_{N \to \infty}\, n \max_{(j_1,j_2,j_3) \in D_{3,N}} \left|\mathbb{E}_p \left[(\mathds{1}_{j_1}-\pi_{j_1})^2(\mathds{1}_{j_2}-\pi_{j_2})(\mathds{1}_{j_3}-\pi_{j_3})\right]\right| < \infty,
	\end{align*}
	where $D_{t,N}$ denotes the set of all distinct $t$-tuples $(j_1, \ldots, j_t)$ from $U$.
\end{assumptions}

\begin{remark}
	Assumptions~\ref{B1}--\ref{B3} and~\ref{B5} and~\ref{B7} are standard in the model-assisted estimation literature; see for example \citet{breidt2000local}, \citet{breidt2005model}, \citet{breidt2017model}, \citet{goga2014efficient}, \citet{dagdoug2023model}, and \citet{mostafa2019finite}. Assumption~\ref{B4} broadens the kernel class by allowing asymmetric associated kernels \citep{esstafa2026properties, stats4010013, wansouwe2016ake, funke2025uniform}.
\end{remark}

We consider the asymptotic framework of \citet{breidt2000local}, where both the population and sample sizes increase to infinity ($N \to \infty$). Two sources of randomness are distinguished. The first is due to the sampling design, with probability measure $p$ and associated notation $\mathcal{O}_p$, $o_p$, and $\mathbb{E}_p(\cdot)$. The second arises from the superpopulation model, in which the finite population is treated as a realization, and is characterized by the measure $x_i$.

\subsection{Asymptotic properties}\label{subsec:asymptotics}

We now establish the asymptotic properties of the estimator $\widetilde{y}_{MA}$ defined in \eqref{y_tilde_MA}, where $\widehat{\mathrm{Var}}(\cdot)$ defined in \eqref{Var^} denotes the empirical variance.
\begin{theorem}\label{thm:unbiased}
	Under Assumptions~\textnormal{\ref{B1}--\ref{B7}}, the estimator \eqref{y_tilde_MA} is asymptotically design-unbiased in the sense that
	\[
	\lim_{N \to \infty} \mathbb{E}_p\left(\widetilde{y}_{MA} - \overline{y}\right) = 0 \quad \xi\text{-probability 1,}
	\]
	and design-consistent in the sense that for all $\epsilon>0,$
	\[
	\lim_{N \to \infty} \mathbb{E}_p\left(\mathds{1}_{\{|\widetilde{y}_{MA} - \overline{y}| > \epsilon\}}\right) = 0 \quad \xi\text{-probability 1}.
	\]
\end{theorem}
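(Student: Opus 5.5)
The plan is to follow the argument of \citet{breidt2000local} (their Theorem~1), adapted to associated kernels. We use the decomposition
\[
\widetilde{y}_{MA} - \overline{y} = \frac{1}{N}\sum_{j\in U}\left(\frac{\mathds{1}_j}{\pi_j}-1\right)(y_j - m_j) + \frac{1}{N}\sum_{j\in U}\left(1-\frac{\mathds{1}_j}{\pi_j}\right)(\widehat{m}^1_j - m_j),
\]
which follows directly from \eqref{y*} and \eqref{y_tilde_MA}. The first term is the error of the generalized difference estimator $\overline{y}^\ast$. It has design expectation zero and design variance \eqref{var_y_bar*}. By Assumption~\ref{B6}, this variance is bounded by
\[
\frac{1}{\lambda^2 N^2}\sum_j (y_j-m_j)^2 + \frac{\max_{i\ne j}|\pi_{ij}-\pi_i\pi_j|}{\lambda^2}\Bigl(\frac1N\sum_j|y_j-m_j|\Bigr)^2 = \mathcal{O}(n^{-1}).
\]
This bound holds provided the population means $N^{-1}\sum_j (y_j-m_j)^2$ are bounded $\xi$-a.s. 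That in turn follows from the compact support of the errors (Assumption~\ref{B1}), continuity of $m$ on the compact $\mathcal{I}$ (Assumptions~\ref{B2}--\ref{B3}), and boundedness of $m_j$ as a locally weighted average of bounded $y_k$. The last point needs the denominator $\overline{y}_{j1}\overline{y}_{j3}-\overline{y}_{j2}^2$ to be bounded away from zero, which comes from $f>0$ and $A_2(x_j,h)>0$.

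For the second term, we linearize $\widehat{m}^1_j$ through \eqref{Rj}:
\[
\widehat{m}^1_j - m_j = \frac1N\sum_{k\in U} z_{jk}\Bigl(\frac{\mathds{1}_k}{\pi_k}-1\Bigr) + \mathcal{O}(\delta N^{-2}) + R_{jN}.
\]
We bound each piece in $L^1$ or $L^2$ design norm, after multiplying by $(1-\mathds{1}_j/\pi_j)$ and averaging over $j$.
\begin{itemize}
\item \emph{Linear part.} It gives a double sum $N^{-2}\sum_{j,k}(1-\mathds{1}_j/\pi_j)(\mathds{1}_k/\pi_k-1)z_{jk}$. Its expectation and second moment are controlled by the covariance and fourth-order moment conditions of Assumptions~\ref{B6}--\ref{B7}. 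This yields order $\mathcal{O}(n^{-1})$ for the mean and $o(1)$ for the mean square, provided $N^{-1}\sum_k z_{jk}^2$ is uniformly bounded.
\item \emph{Ridge part.} This piece is deterministic, of order $\delta/N^2$, and hence negligible.
\item \emph{Remainder.} We show $N^{-1}\sum_j \mathbb{E}_p R_{jN}^2\to0$. Then Cauchy--Schwarz together with $|1-\mathds{1}_j/\pi_j|\le 1/\lambda$ gives $\mathbb{E}_p|\cdot|\to0$.
\end{itemize}

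Asymptotic design-unbiasedness then follows by taking $\mathbb{E}_p$ of the decomposition: the first term has mean zero and the second has mean tending to zero. Design consistency follows from Markov's inequality applied to $\mathbb{E}_p|\widetilde{y}_{MA}-\overline{y}|\to0$.

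The main obstacle is the remainder $R_{jN}$ and the uniform bound on $z_{jk}$. This is where the associated kernel enters, since the quantities $K_{x_j,h}(x_k)$ are not of the form $h^{-1}K((x_k-x_j)/h)$. Our first approach is a second-order Taylor expansion of the smooth map $(\widehat{\overline{y}}_j,\delta)\mapsto\widehat{m}^1_j$. The ridge term keeps the denominator of $\widehat{m}^1_j$ at least $\delta/N^2$, so $\widehat{m}^1_j$ is bounded by a polynomial in $N$. On the event where the estimated denominator stays within a fixed fraction of its population value, the second derivatives are bounded. In that case $R_{jN}=\mathcal{O}(\|\widehat{\overline{y}}_j-\overline{y}_j\|^2)$, and its expectation is $\mathcal{O}(n^{-1})$ multiplied by $N^{-1}\sum_k K_{x_j,h}^2(x_k)$. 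The latter behaves like $\|K_{x_j,h}\|_2^2\le c_2 h^{-r_2}$, by Assumption~\ref{A4} and a Riemann-sum argument. We therefore need $nh^{r_2}\to\infty$, as in Assumption~\ref{A5}, and we use it here (reading it as implied within \ref{B5}). The complementary bad event has exponentially or polynomially small design probability by Chebyshev's inequality with the fourth-moment condition of Assumption~\ref{B7}, which outweighs the polynomial bound.
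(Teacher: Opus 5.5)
Your architecture matches the paper's. You use the same split of $\widetilde{y}_{MA}-\overline{y}$ into the difference-estimator error plus $N^{-1}\sum_{j\in U}(1-\mathds{1}_j/\pi_j)(\widehat{m}^1_j-m_j)$, the linearization \eqref{Rj}, and Markov's inequality for consistency. The paper handles the second term by Cauchy--Schwarz and Lemma~\ref{Lemme4}, which is built from exactly your three pieces.

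The gap is in the step you single out yourself, the remainder $R_{jN}$. Your good/bad-event argument needs $\sup_{j,s}R_{jN}^2\cdot\mathbb{P}_p(\text{bad})\to0$, but the ridge only gives a polynomial a priori bound. Since $\mathcal{X}_s^\top\mathcal{W}_s\mathcal{X}_s$ is positive semidefinite, $\|(\mathcal{X}_s^\top\mathcal{W}_s\mathcal{X}_s+\delta N^{-2}I_2)^{-1}\|\le N^2/\delta$. Hence $|\widehat{m}^1_j|\le (N^2/\delta)\|\mathcal{X}_s^\top\mathcal{W}_sY_s\|$, and $R_{jN}^2$ is controlled only by something of order $N^4$ or larger. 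On the other side, Chebyshev's inequality with the fourth-order design moments of Assumption~\ref{B7} gives at best $\mathbb{P}_p(\text{bad})\lesssim\mathbb{E}_p\|\widehat{\overline{y}}_j-\overline{y}_j\|^4=O\{(nh^{r_2})^{-2}\}$. The resulting bound on the bad-event contribution, of order $N^4(nh^{r_2})^{-2}$, does not tend to zero.

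Exponential tails are not implied by \ref{B6}--\ref{B7}, which control design moments only up to order four. Hoeffding-type bounds hold for SRSWOR, but the theorem concerns general designs. What is missing is a deterministic bound $\sup_{j,s}|R_{jN}|=o(nh^{r_2})$. The paper relies on the stronger assertion that $R_{jN}^2$ is uniformly bounded in $j$ and $s$ (Lemma~\ref{Lemme2}(v)). This rests on the uniform boundedness of $\widehat{\overline{y}}_{jg}$ and $\widehat{m}^1_j$ in $j$ and $s$ from Lemma~\ref{Lemme2}(iii), as in Lemma~2 of Breidt and Opsomer. With that bound, the bad event costs only $O\{\mathbb{P}_p(\text{bad})\}$. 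The fourth-moment bound on $Q_N$ in Lemma~\ref{Lemme3} then gives $N^{-1}\sum_{j}\mathbb{E}_pR_{jN}^2\to0$, which is all the theorem needs.

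There are also three smaller points.
\begin{enumerate}
\item Your proviso that $N^{-1}\sum_k z_{jk}^2$ be uniformly bounded is false. To leading order $z_{jk}\approx K_{x_j,h}(x_k)(y_k-m_j)/f(x_j)$, so $N^{-1}\sum_k z_{jk}^2$ is of the order of $\sigma^2(x_j)\|K_{x_j,h}\|_2^2/f(x_j)\to\infty$. Your conclusion for the linear part still holds. The off-diagonal design covariances only involve $N^{-1}\sum_k|z_{jk}|=O(1)$, and the diagonal terms involve $N^{-2}\sum_k z_{jk}^2=O\{(Nh^{r_2})^{-1}\}$. This is the bound $\lambda^{-1}N^{-3}\sum_{j,k}z_{jk}^2$ used in Lemma~\ref{Lemme4}.
\item The denominator $\overline{y}_{j1}\overline{y}_{j3}-\overline{y}_{j2}^2$ tends to zero at the rate $A_2(x_j,h)$, so it is not bounded away from zero. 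To get boundedness of $m_j$, and a Taylor bound $R_{jN}=O(\|\widehat{\overline{y}}_j-\overline{y}_j\|^2)$ with constants free of $h$, work with the coordinates $g=2,3,5$ rescaled by the appropriate powers of $A_2(x_j,h)^{-1/2}$.
\item Your observation that a bandwidth rate beyond \ref{B5} is needed is fair. The rate in Lemma~\ref{Lemme3} implicitly requires one as well.
\end{enumerate}
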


\begin{proof}
	Writing
	\begin{align*}
		\widetilde{y}_{MA} - \overline{y} &= \frac{1}{N}\sum_{j \in U}(y_j - m_j)\left(\frac{\mathds{1}_j}{\pi_j} - 1\right) + \frac{1}{N}\sum_{j \in U}(\widehat{m}^1_j - m_j)\left(1 - \frac{\mathds{1}_j}{\pi_j}\right),
	\end{align*}
	so that
	\begin{align}\label{E}
		\mathbb{E}_p \left(|\widetilde{y}_{MA} - \overline{y}|\right) &\leq \mathbb{E}_p \left|\frac{1}{N}\sum_{j \in U}(y_j - m_j)\left({\mathds{1}_j}{\pi_j}^{-1} - 1\right)\right| \nonumber \\
		&\quad + \left[\mathbb{E}_p \left\{\sum_{j \in U}\frac{(\widehat{m}^1_j - m_j)^2}{N}\right\} \mathbb{E}_p \left\{\sum_{j \in U}\frac{\left(1 - {\mathds{1}_j}{\pi_j}^{-1}\right)^2}{N}\right\}\right]^{1/2}.
	\end{align}
	Under Assumptions~\ref{B1}--\ref{B7}, $\limsup_{N \to \infty}\sum_{j \in U}(y_j - m_j)^2/N < \infty$ by \citet[Theorem~1]{breidt2000local}, and the first term on the right-hand side of \eqref{E} converges to 0 by Lemma~\ref{Lemme2} ($\textnormal{iii}$). Under Assumption~\ref{B6},
	\[
	\mathbb{E}_p \left\{\sum_{j \in U}\frac{\left(1 - {\mathds{1}_j}{\pi_j}^{-1}\right)^2}{N}\right\} = \sum_{j \in U}\frac{\pi_j(1-\pi_j)}{N\pi_j^2} \leq \frac{1}{\lambda}.
	\]
	Furthermore, by Lemma~\ref{Lemme4}, $\limsup_{N \to \infty}\sum_{j \in U}\mathbb{E}_p(\widehat{m}^1_j - m_j)^2/N \to 0$, so the entire expression in \eqref{E} tends to zero by Markov's inequality, completing the proof.
\end{proof}

\begin{theorem}\label{thm:MSE}
	Under Assumptions~\textnormal{\ref{B1}--\ref{B7}},
	\begin{equation}
		n\,\mathbb{E}_p \left\{(\widetilde{y}_{MA} - \overline{y})^2\right\} = \frac{n}{N^2}\sum_{i,j \in U}(y_i - m_i)(y_j - m_j)\frac{\pi_{ij} - \pi_i\pi_j}{\pi_i\pi_j} + o(1).
		\label{thm:MSE:eq}
	\end{equation}
\end{theorem}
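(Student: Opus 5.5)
Our plan follows \citet[Theorem~2]{breidt2000local}. We start from the same decomposition used in the proof of Theorem~\ref{thm:unbiased}:
\[
\widetilde{y}_{MA} - \overline{y} = a_N + b_N,\qquad a_N=\frac{1}{N}\sum_{j \in U}(y_j - m_j)\left(\frac{\mathds{1}_j}{\pi_j} - 1\right),\quad b_N=\frac{1}{N}\sum_{j \in U}(\widehat{m}^1_j - m_j)\left(1 - \frac{\mathds{1}_j}{\pi_j}\right).
\]
Since $m_j$ is a fixed population quantity, $a_N$ is design-linear. A direct expansion with $\mathbb{E}_p\{(\mathds{1}_i-\pi_i)(\mathds{1}_j-\pi_j)\}=\pi_{ij}-\pi_i\pi_j$ gives $n\,\mathbb{E}_p(a_N^2)$ exactly equal to the leading term on the right of \eqref{thm:MSE:eq}. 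This term is $\mathcal{O}(1)$ under Assumption~\ref{B6}: the diagonal terms contribute at most $(n/N^2)\lambda^{-1}\sum_j (y_j-m_j)^2$, the off-diagonal terms at most $(n/N^2)\,n^{-1}C\lambda^{-2}(\sum_j|y_j-m_j|)^2$, and $\limsup_N N^{-1}\sum_j(y_j-m_j)^2<\infty$ holds $\xi$-a.s. by \citet[Theorem~1]{breidt2000local}. The claim then follows from
\[
n\,\mathbb{E}_p\{(a_N+b_N)^2\}=n\,\mathbb{E}_p(a_N^2)+2n\,\mathbb{E}_p(a_Nb_N)+n\,\mathbb{E}_p(b_N^2)
\]
and Cauchy--Schwarz, provided we show $n\,\mathbb{E}_p(b_N^2)=o(1)$.

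To control $b_N$ we use the remainder identity \eqref{Rj}:
\[
\widehat{m}^1_j - m_j = \frac{1}{N}\sum_{k\in U} z_{jk}\left(\frac{\mathds{1}_k}{\pi_k}-1\right) + c_j\frac{\delta}{N^2} + R_{jN},
\]
where $c_j$ is the $\delta$-derivative appearing in \eqref{Rj}. This splits $b_N=b_{1N}+b_{2N}+b_{3N}$, and we bound each piece.

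\textbf{The ridge term $b_{2N}$.} It is $\mathcal{O}(N^{-2})$ uniformly, because $c_j$ is bounded: the denominator $\overline{y}_{j1}\overline{y}_{j3}-\overline{y}_{j2}^2$ is bounded away from zero under Assumptions~\ref{B2} and~\ref{B4}. Hence $n\,\mathbb{E}_p(b_{2N}^2)=\mathcal{O}(nN^{-4})\to0$.

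\textbf{The linear term $b_{1N}$.} It is a double sum
\[
b_{1N}=-\frac{1}{N^2}\sum_{j}\sum_{k} z_{jk}\left(\frac{\mathds{1}_j}{\pi_j}-1\right)\left(\frac{\mathds{1}_k}{\pi_k}-1\right).
\]
Its second moment involves fourth-order products of centred indicators. Splitting the index set into cases with 4, 3 and 2 distinct indices and using the bounds in Assumption~\ref{B7}, together with the boundedness of $z_{jk}$ (compact error support, Assumption~\ref{B1}, and continuity of the kernel), yields $\mathbb{E}_p(b_{1N}^2)=\mathcal{O}(n^{-2})+\mathcal{O}(N^{-1}n^{-1})$. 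Therefore $n\,\mathbb{E}_p(b_{1N}^2)\to0$.

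\textbf{The nonlinear remainder $b_{3N}$ (main obstacle).} By Cauchy--Schwarz exactly as in \eqref{E},
\[
\mathbb{E}_p(b_{3N}^2)\le \lambda^{-2}N^{-1}\sum_j \mathbb{E}_p(R_{jN}^2).
\]
We therefore need $n\,\mathbb{E}_p(R_{jN}^2)\to0$ uniformly in $j$. Since $R_{jN}$ is the second-order Taylor remainder of a smooth rational function of $\widehat{\overline{y}}_j$, it is bounded by $C\|\widehat{\overline{y}}_j-\overline{y}_j\|^2$ on the event where $\widehat{\overline{y}}_{j1}\widehat{\overline{y}}_{j3}-\widehat{\overline{y}}_{j2}^2$ stays bounded below. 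Off that event, the ridge term $\delta/N^2$ keeps $\widehat{m}^1_j$ at most polynomially large in $N$, while the event itself has small probability by Markov's inequality. Fourth moments of the Horvitz--Thompson-type means $\widehat{\overline{y}}_{jg}-\overline{y}_{jg}$ are $\mathcal{O}(n^{-2})$ by Assumption~\ref{B7}, as in Lemma~\ref{Lemme4}, so $n\,\mathbb{E}_p(R_{jN}^2)=\mathcal{O}(n^{-1})$ on the good event. The delicate point is the effect of the kernel scaling $h^{-r_2}$ on these moments: it must be absorbed using $nh^{r_2}\to\infty$. We expect to handle it with the same argument as in Lemma~\ref{Lemme4}, which gives $N^{-1}\sum_j\mathbb{E}_p(\widehat{m}^1_j-m_j)^2\to0$, but now tracking the rate $o(n^{-1})$ rather than $o(1)$.

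Combining the three bounds gives $n\,\mathbb{E}_p(b_N^2)=o(1)$. With $n\,\mathbb{E}_p(a_N^2)=\mathcal{O}(1)$, Cauchy--Schwarz makes the cross term $o(1)$ as well, which yields \eqref{thm:MSE:eq}.
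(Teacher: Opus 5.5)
Your skeleton coincides with the paper's. Here $\sqrt{n}\,a_N$ and $\sqrt{n}\,b_N$ are its $L_N$ and $M_N$, $n\,\mathbb{E}_p(a_N^2)$ is exactly the leading term and is bounded through Assumption~\ref{B6}, and the cross term is handled by Cauchy--Schwarz. The paper then gets $n\,\mathbb{E}_p(b_N^2)=\mathbb{E}_p(M_N^2)=o(1)$ in one line from Lemma~\ref{Lemme5}. You instead try to re-derive that estimate, and this is where the proposal has a genuine gap.

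Already the linear piece rests on a false claim. $z_{jk}$ is not bounded, because it carries the factor $K_{x_j,h}(x_k)$, which blows up as $h\to0$ (e.g.\ $K^{\mathcal{N}}_{x_j,h}(x_j)=1/(h\sqrt{2\pi})$). Continuity of the kernel gives nothing uniform in $h$. What you can use is a window-type control of $N^{-1}\sum_k|z_{jk}|$ as in Lemma~\ref{Lemme2}(i), while $N^{-1}\sum_k z_{jk}^2$ grows like $\|K_{x_j,h}\|_2^2$. The repeated-index patterns in $\mathbb{E}_p(b_{1N}^2)$ therefore carry negative powers of $h$. Examples are $N^{-4}\sum_{j,k}z_{jk}^2$ and the non-centred diagonal terms $z_{jj}(\mathds{1}_j/\pi_j-1)^2$, with $z_{jj}$ proportional to $K_{x_j,h}(x_j)$. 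So your rate $\mathcal{O}(n^{-2})+\mathcal{O}(N^{-1}n^{-1})$ is not correct, and $n\,\mathbb{E}_p(b_{1N}^2)\to0$ needs a bandwidth-rate condition.

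The remainder $b_{3N}$, which you rightly call the main obstacle, is left unresolved, and the route you sketch does not close. On the bad event the ridge bounds $|\widehat{m}^1_j|$, hence $|R_{jN}|$, only polynomially in $N$ and $h^{-1}$. Markov's inequality with the fourth-order moments available from Assumption~\ref{B7} makes that event only polynomially small, of order $N^{-2}\|K_{x_j,h}\|_2^4$. So the product need not vanish, let alone be $o(n^{-1})$; and Lemma~\ref{Lemme4} gives only $o(1)$. The paper instead uses the uniform boundedness of $R_{jN}$ in $j$ and $s$ (Lemma~\ref{Lemme2}(v)). Combined with the fourth-moment bound, this gives $\frac{n}{N}\sum_{j\in U}\mathbb{E}_p(R_{jN}^2)=\mathcal{O}\{1/(nh^2)\}$ (Lemma~\ref{Lemme3}), which feeds $q_{1N}$ in Lemma~\ref{Lemme5}.

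The condition you propose to absorb the kernel scaling, $nh^{r_2}\to\infty$, is also problematic. It belongs to the hypotheses of Theorem~\ref{thrm} (Assumption~\ref{A5}), not to Assumptions~\ref{B1}--\ref{B7}, and it is too weak. The $(k,k,l,l)$ pattern in $\mathbb{E}_p|\widehat{\overline{y}}_{jg}-\overline{y}_{jg}|^4$ is of order $N^{-2}\|K_{x_j,h}\|_2^4$. Your quadratic bound therefore gives only $n\,\mathbb{E}_p(R_{jN}^2)=\mathcal{O}\{1/(nh^{2r_2})\}$, so you need something like $nh^{2r_2}\to\infty$. For the Gaussian kernel this is $nh^2\to\infty$, matching the rate of Lemma~\ref{Lemme3}; the paper itself uses that rate implicitly, since Assumption~\ref{B5} only requires $h\to0$. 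The clean fix is to invoke Lemma~\ref{Lemme5} for $n\,\mathbb{E}_p(b_N^2)=o(1)$, exactly as the paper does.
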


\begin{proof}
	Following \citet[Theorem~2]{breidt2000local}, let
	\begin{align*}
		n\,\mathbb{E}_p \left\{(\widetilde{y}_{MA} - \overline{y})^2\right\} &= \mathbb{E}_p \left[\left\{\sqrt{n}\sum_{j \in U}\frac{y_j - m_j}{N}\left(\frac{\mathds{1}_j}{\pi_j} - 1\right) + \sqrt{n}\sum_{j \in U}\frac{m_j - \widehat{m}^1_j}{N}\left(\frac{\mathds{1}_j}{\pi_j} - 1\right)\right\}^2\right] \\
		&= \mathbb{E}_p \left\{(L_N + M_N)^2\right\} = \mathbb{E}_p (L_N^2) + \mathbb{E}_p (M_N^2) + 2\mathbb{E}_p (L_N M_N),
	\end{align*}
	where
	\[
	L_N = \sqrt{n}\sum_{j \in U}\frac{y_j - m_j}{N}\left(\frac{\mathds{1}_j}{\pi_j} - 1\right), \qquad M_N = \sqrt{n}\sum_{j \in U}\frac{m_j - \widehat{m}^1_j}{N}\left(\frac{\mathds{1}_j}{\pi_j} - 1\right).
	\]
	Then
	\begin{align*}
		\mathbb{E}_p(L_N^2) &= \frac{n}{N^2}\sum_{i,j \in U}(y_i - m_i)(y_j - m_j)\frac{\pi_{ij} - \pi_i\pi_j}{\pi_i\pi_j} \\
		&\leq \left(\frac{1}{\lambda} + \frac{n\max_{i,j \in U:\, i \neq j}|\pi_{ij} - \pi_i\pi_j|}{\lambda^2}\right)\sum_{j \in U}\frac{(y_j - m_j)^2}{N},
	\end{align*}
	so $\limsup_{N \to \infty}\mathbb{E}_p (L_N^2) < \infty$ by Assumption~\ref{B6}. By Lemma~\ref{Lemme5}, $\mathbb{E}_p (M_N^2) = o(1)$, and by the Cauchy--Schwarz inequality,
	\[
	\mathbb{E}_p (L_N M_N) \leq \left\{\mathbb{E}_p (L_N^2)\mathbb{E}_p (M_N^2)\right\}^{1/2} = o(1).
	\]
	Therefore $n\,\mathbb{E}_p (\widetilde{y}_{MA} - \overline{y})^2 = \mathbb{E}_p (L_N^2) + o(1)$, which establishes the result.
\end{proof}

The asymptotic mean squared error in ~\eqref{thm:MSE:eq} can be estimated consistently under mild conditions.

\begin{theorem}\label{thm:variance}
	Under Assumptions~\textnormal{\ref{B1}--\ref{B7}},
	\[
	\lim_{N \to \infty} n\,\mathbb{E}_p\left|\widehat{\mathrm{Var}}(\widetilde{y}_{MA}) - AMSE(\widetilde{y}_{MA})\right| = 0,
	\]
	where
	\begin{equation}\label{Var^}
		\widehat{\mathrm{Var}}(\widetilde{y}_{MA}) = \frac{1}{N^2}\sum_{i,j \in U}\frac{(y_i - \widehat{m}^1_i)}{\pi_i} \frac{(y_j - \widehat{m}^1_j)}{\pi_j}\frac{\pi_{ij} - \pi_i\pi_j}{\pi_{ij}}{\mathds{1}_i \mathds{1}_j},
	\end{equation}
	and
	\[
	AMSE(\widetilde{y}_{MA}) = \frac{1}{N^2}\sum_{i,j \in U}(y_i - m_i)(y_j - m_j)\frac{\pi_{ij} - \pi_i\pi_j}{\pi_i\pi_j}.
	\]
	Consequently, $\widehat{\mathrm{Var}}(\widetilde{y}_{MA})$ is asymptotically unbiased and consistent for $AMSE(\widetilde{y}_{MA})$ with respect to the sampling design.
\end{theorem}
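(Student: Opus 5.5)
Following the approach of \citet[Theorem~3]{breidt2000local}, I would split the difference $\widehat{\mathrm{Var}}(\widetilde{y}_{MA}) - AMSE(\widetilde{y}_{MA})$ into two pieces. Introduce the intermediate quantity
\[
V^\ast = \frac{1}{N^2}\sum_{i,j \in U}\frac{(y_i - m_i)}{\pi_i}\frac{(y_j - m_j)}{\pi_j}\frac{\pi_{ij} - \pi_i\pi_j}{\pi_{ij}}\mathds{1}_i\mathds{1}_j .
\]
This is the HT-type variance estimator built with the population fit $m_j$. The plan is to bound $n\,\mathbb{E}_p|V^\ast - AMSE|$ and $n\,\mathbb{E}_p|\widehat{\mathrm{Var}} - V^\ast|$ separately and show that each tends to $0$.

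\textbf{First piece.} Since $\mathbb{E}_p(\mathds{1}_i\mathds{1}_j)=\pi_{ij}$, the quantity $V^\ast$ is design-unbiased for $AMSE$. I would therefore control $n^2\,\mathbb{E}_p(V^\ast - AMSE)^2$. Write $a_{ij} = (y_i-m_i)(y_j-m_j)(\pi_{ij}-\pi_i\pi_j)/(\pi_i\pi_j\pi_{ij})$. Expanding the square gives sums of $\mathbb{E}_p[(\mathds{1}_{i}\mathds{1}_{j}-\pi_{ij})(\mathds{1}_{k}\mathds{1}_{l}-\pi_{kl})]$. I would split these by how many indices coincide.
\begin{itemize}
\item Diagonal terms with $i=j$: here $|a_{ii}| \le \lambda^{-2}(y_i-m_i)^2$.
\item Off-diagonal terms: here $|a_{ij}| \le n^{-1}C(\lambda,\lambda^\ast)|y_i-m_i||y_j-m_j|$ by Assumption~\ref{B6}.
\end{itemize}
The fourth-order and third-order moment conditions of Assumption~\ref{B7} then make each group $o(1)$. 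This step also uses the bounds $\limsup N^{-1}\sum (y_j-m_j)^4<\infty$ and $\limsup N^{-1}\sum(y_j-m_j)^2<\infty$, which follow from Assumption~\ref{B1}, the compact support of the errors, and boundedness of $m$ and $m_j$ on $\mathcal{I}$ (as in \citet[Theorem~1]{breidt2000local}). Jensen's inequality then yields $n\,\mathbb{E}_p|V^\ast-AMSE|\to 0$.

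\textbf{Second piece.} Write $e_i = y_i - m_i$ and $d_i = m_i - \widehat{m}^1_i$, so that $y_i-\widehat{m}^1_i = e_i + d_i$. Then $\widehat{\mathrm{Var}} - V^\ast$ splits into a cross term in $e_i d_j$ (twice) and a quadratic term in $d_i d_j$. Group the weights again into diagonal ($i=j$, weight of order $1$) and off-diagonal (weight of order $n^{-1}$ by Assumption~\ref{B6}). Each term is then bounded by Cauchy--Schwarz by an expression of the form
\[
C\Big(\tfrac1N\sum_U e_i^2\Big)^{1/2}\Big(\tfrac1N\sum_U d_i^2\Big)^{1/2} + C\,\tfrac1N\sum_U d_i^2 .
\]
The extra factor $n/N$ is bounded by Assumption~\ref{B5}. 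Taking $\mathbb{E}_p$ and applying Cauchy--Schwarz once more reduces everything to $N^{-1}\sum_U \mathbb{E}_p(\widehat{m}^1_j - m_j)^2 \to 0$, which is Lemma~\ref{Lemme4}, already used in Theorem~\ref{thm:unbiased}.

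\textbf{Main obstacle.} The delicate part is the off-diagonal quadratic term $\sum_{i\neq j} d_i d_j(\cdots)\mathds{1}_i\mathds{1}_j$. A naive bound gives $N^2\cdot n^{-1}$ terms, which is only $O(N/n)$ times the average of $d^2$; this is fine since $n/N\to\pi>0$, but the bound must be tracked carefully. If the crude bound fails, I would instead linearize $d_j$ through $R_{jN}$ in \eqref{Rj}. That gives $d_j = -N^{-1}\sum_k z_{jk}(\mathds{1}_k/\pi_k-1) + O(\delta N^{-2}) - R_{jN}$, and the moment conditions of Assumption~\ref{B7} can then be exploited directly, as in Lemma~\ref{Lemme5}. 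Finally, convergence in $L^1$ gives asymptotic unbiasedness, and Markov's inequality gives design consistency.
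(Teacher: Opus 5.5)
Your proposal is correct and takes essentially the paper's route: the paper's proof just invokes \citet[Theorem~3]{breidt2000local} with $\widehat{m}^1_j$ in place of the classical kernel fit, and your split through $V^\ast$ (second-moment control of $V^\ast - AMSE$ via Assumptions~\ref{B6}--\ref{B7}, then a Cauchy--Schwarz reduction of the cross and quadratic terms to Lemma~\ref{Lemme4}) is a faithful reconstruction of that argument. Your ``main obstacle'' is not one, since the prefactor $n$ cancels the $n^{-1}$ off-diagonal weight and gives $CN^{-2}(\sum_U|d_i|)^2\le CN^{-1}\sum_U d_i^2$ directly; in the first piece, the diagonal/off-diagonal cross terms are most cleanly handled by Cauchy--Schwarz, or by applying the triangle inequality before squaring, while the off-diagonal group rests on the pair-product condition of Assumption~\ref{B7} for distinct quadruples.
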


\begin{proof}
	The result follows from \citet[Theorem~3]{breidt2000local} upon replacing the classical kernel regression estimator by the associated kernel estimator $\widehat{m}^1_j$.
\end{proof}

The asymptotic normality of \eqref{y_tilde_MA} follows from that of the generalized difference estimator $\overline{y}^*$.

\begin{theorem}\label{thm:normality}
	Under Assumptions~\textnormal{\ref{B1}--\ref{B7}}, let $\overline{y}^{\ast}$ and $\mathrm{Var}_p(\overline{y}^{\ast})$ be as defined in \eqref{y*} and \eqref{var_y_bar*}. As $N \to \infty$,
	\[
	\frac{\overline{y}^* - \overline{y}}{\mathrm{Var}_p^{1/2}(\overline{y}^*)} \xrightarrow{\mathcal{L}} \mathit{N}(0,1) \implies \frac{\widetilde{y}_{MA} - \overline{y}}{\widehat{\mathrm{Var}}^{1/2}(\widetilde{y}_{MA})} \xrightarrow{\mathcal{L}} \mathit{N}(0,1),
	\]
	where $\widehat{\mathrm{Var}}(\widetilde{y}_{MA})$ is as defined in~\eqref{Var^}.
\end{theorem}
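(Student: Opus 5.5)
The plan is to follow \citet[Theorem~4]{breidt2000local}. We pass from $\overline{y}^*$ to $\widetilde{y}_{MA}$ by Slutsky's theorem, controlling two discrepancies: the numerator difference $\widetilde{y}_{MA}-\overline{y}^*$ and the ratio $\widehat{\mathrm{Var}}(\widetilde{y}_{MA})/\mathrm{Var}_p(\overline{y}^*)$.

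We write
\[
\frac{\widetilde{y}_{MA}-\overline{y}}{\widehat{\mathrm{Var}}^{1/2}(\widetilde{y}_{MA})}
=\left\{\frac{\overline{y}^*-\overline{y}}{\mathrm{Var}_p^{1/2}(\overline{y}^*)}+\frac{\widetilde{y}_{MA}-\overline{y}^*}{\mathrm{Var}_p^{1/2}(\overline{y}^*)}\right\}\left\{\frac{\mathrm{Var}_p(\overline{y}^*)}{\widehat{\mathrm{Var}}(\widetilde{y}_{MA})}\right\}^{1/2}.
\]
It then suffices to show two things, $\xi$-almost surely:
\begin{itemize}
\item[(a)] $\sqrt{n}\,(\widetilde{y}_{MA}-\overline{y}^*)=o_p(1)$;
\item[(b)] $n\,\widehat{\mathrm{Var}}(\widetilde{y}_{MA})-n\,\mathrm{Var}_p(\overline{y}^*)=o_p(1)$.
\end{itemize}
Both must be combined with the nondegeneracy condition $\liminf_N n\,\mathrm{Var}_p(\overline{y}^*)>0$. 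This condition is implicit in assuming the CLT for $\overline{y}^*$, and I would state it explicitly as part of the hypothesis, as Breidt and Opsomer do.

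For (a), the decomposition in the proof of Theorem~\ref{thm:MSE} gives $\sqrt{n}(\widetilde{y}_{MA}-\overline{y}^*)=M_N$ exactly. Indeed,
\[
\widetilde{y}_{MA}-\overline{y}^*=N^{-1}\sum_{j\in U}(m_j-\widehat{m}^1_j)(\mathds{1}_j/\pi_j-1),
\]
and Lemma~\ref{Lemme5} already gives $\mathbb{E}_p(M_N^2)=o(1)$. Chebyshev's inequality then yields $M_N=o_p(1)$. Multiplying by $\{n\mathrm{Var}_p(\overline{y}^*)\}^{-1/2}=\mathcal{O}(1)$ shows that the second summand in the braces is $o_p(1)$.

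For (b), note that $\mathrm{Var}_p(\overline{y}^*)$ in \eqref{var_y_bar*} coincides with $AMSE(\widetilde{y}_{MA})$ of Theorem~\ref{thm:variance}; both equal $N^{-2}\sum_{i,j}(\pi_{ij}-\pi_i\pi_j)(y_i-m_i)(y_j-m_j)/(\pi_i\pi_j)$. Theorem~\ref{thm:variance} gives $n\,\mathbb{E}_p|\widehat{\mathrm{Var}}(\widetilde{y}_{MA})-AMSE(\widetilde{y}_{MA})|\to0$. Markov's inequality turns this into (b). Dividing by $n\mathrm{Var}_p(\overline{y}^*)$, which is bounded away from zero, gives
\[
\widehat{\mathrm{Var}}(\widetilde{y}_{MA})/\mathrm{Var}_p(\overline{y}^*)\to1
\]
in design probability. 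By the continuous mapping theorem the square-root ratio also tends to $1$.

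Combining the hypothesis with (a), (b) and Slutsky's theorem gives the conclusion. The main obstacle is the nondegeneracy of $n\mathrm{Var}_p(\overline{y}^*)$. Theorem~\ref{thm:MSE} gives only an upper bound on this quantity. Without a lower bound, the $o_p(1)$ errors in (a) and (b) cannot be converted into relative errors. I would first try to treat the condition as part of the meaning of the assumed CLT, since a standardized statistic requires a nonvanishing scale. If a self-contained argument is preferred, I would try to derive it from Assumption~\ref{B3} ($\sigma^2>0$) together with $n/N\to\pi<1$ (Assumption~\ref{B5}), for example under SRSWOR, where
\[
n\mathrm{Var}_p=(1-n/N)\,S^2_{y-m}\,\{1+o(1)\}
\]
and $S^2_{y-m}$ is bounded below $\xi$-a.s. by a law of large numbers for the residuals.
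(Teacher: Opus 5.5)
Your proposal is correct and follows the paper's own argument. Like the paper, you use the decomposition from the proof of Theorem~\ref{thm:MSE} (via Lemma~\ref{Lemme5}) to get $\widetilde{y}_{MA}-\overline{y}=\overline{y}^*-\overline{y}+o_p(n^{-1/2})$, invoke Theorem~\ref{thm:variance} for $\widehat{\mathrm{Var}}(\widetilde{y}_{MA})/AMSE(\widetilde{y}_{MA})\to 1$, and conclude by Slutsky's theorem. The paper uses the nondegeneracy condition $\liminf_N n\,\mathrm{Var}_p(\overline{y}^*)>0$ you flag silently in both steps; it is an extra hypothesis, not a consequence of the assumed CLT (which only requires $\mathrm{Var}_p(\overline{y}^*)>0$), so stating it explicitly, as you propose, is the right fix.
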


\begin{proof}
	From the proof of Theorem~\ref{thm:MSE},
	\[
	\widetilde{y}_{MA} - \overline{y} = \sum_{j \in U}\frac{y_j - m_j}{N}\left(\frac{\mathds{1}_j}{\pi_j}- 1\right) + o_p\!\left(\frac{1}{\sqrt{n}}\right) = \overline{y}^* - \overline{y} + o_p\!\left(\frac{1}{\sqrt{n}}\right).
	\]
	By Theorem~\ref{thm:variance}, $\widehat{\mathrm{Var}}(\widetilde{y}_{MA})/AMSE(\widetilde{y}_{MA}) \xrightarrow{p} 1$, and the result follows.
\end{proof}

\begin{corollary}\label{cor:SRSWOR}
	Under \textnormal{SRSWOR} and Assumptions~\textnormal{\ref{B1}--\ref{B7}}, as $N \to \infty$,
	\[
	\frac{\widetilde{y}_{MA} - \overline{y}}{\widehat{\mathrm{Var}}^{1/2}(\widetilde{y}_{MA})} \xrightarrow{\mathcal{L}} \mathit{N}(0,1),
	\]
	where
	\[
	\widehat{\mathrm{Var}}(\widetilde{y}_{MA}) = \left(1 - \frac{n}{N}\right)\frac{\sum_{j \in s}(y_j - \widehat{m}^1_j)^2 - n^{-1}\left\{\sum_{j \in s}(y_j - \widehat{m}^1_j)\right\}^2}{n(n-1)}.
	\]
\end{corollary}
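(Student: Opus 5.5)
The corollary follows from Theorem~\ref{thm:normality} once two things are checked: SRSWOR satisfies Assumption~\ref{B6} (and~\ref{B7}), and the general estimator \eqref{Var^} reduces to the displayed closed form. The steps are as follows.

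\emph{Step 1: the design.} Under SRSWOR, $\pi_i=n/N$ for all $i$, $\pi_{ij}=n(n-1)/\{N(N-1)\}$ for $i\neq j$, and $\pi_{ii}=\pi_i$.
\begin{itemize}
\item With $n/N\to\pi\in(0,1)$ (Assumption~\ref{B5}), we get $\min\pi_j\to\pi>0$ and $\min\pi_{ij}\to\pi^2>0$.
\item A direct computation gives $\pi_{ij}-\pi_i\pi_j=-\frac{n(N-n)}{N^2(N-1)}$, so $n\max_{i\neq j}|\pi_{ij}-\pi_i\pi_j|=O(n^2/N^2)=O(1)$. Hence Assumption~\ref{B6} holds.
\item The higher-order moment conditions of Assumption~\ref{B7} are known to hold for SRSWOR, as noted in \citet{breidt2000local}. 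I would cite that rather than recompute the fourth-order moments, which is the most tedious part if it were to be done by hand.
\end{itemize}

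\emph{Step 2: the variance estimator.} Write $e_i=y_i-\widehat m^1_i$ and $f=n/N$.
\begin{itemize}
\item Diagonal terms of \eqref{Var^}: $(\pi_i-\pi_i^2)/\pi_i=1-f$, so these contribute $\frac{1}{N^2}\frac{1-f}{f^2}\sum_{s}e_i^2$.
\item Off-diagonal terms: the weight $(\pi_{ij}-\pi_i\pi_j)/(\pi_{ij}\pi_i\pi_j)$ simplifies to $-\frac{N^2(N-n)}{n^2(n-1)N}$. Multiplying by $1/N^2$ gives $-\frac{1-f}{n(n-1)}$.
\item Combining, with $N^{-2}f^{-2}=n^{-2}$, the estimator equals
\[
\frac{1-f}{n(n-1)}\Big\{(n-1)n^{-1}\cdot\tfrac{n}{n-1}\textstyle\sum_s e_i^2-\sum_{i\neq j\in s}e_ie_j\Big\}.
\]
\item Using $\sum_{i\neq j}e_ie_j=(\sum_s e_i)^2-\sum_s e_i^2$, this becomes $\frac{1-f}{n(n-1)}\{\sum_s e_i^2-n^{-1}(\sum_s e_i)^2\}$ up to the factor conventions. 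I would track the constants carefully here, since the diagonal coefficient $(1-f)/n^2$ versus $(1-f)/\{n(n-1)\}$ must match the stated form. If the exact forms differ, they differ only by a factor $1+O(1/n)$, and the ratio to AMSE still tends to $1$ by Theorem~\ref{thm:variance}.
\end{itemize}

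\emph{Step 3: CLT for $\overline y^*$.} Under SRSWOR, $\overline y^*-\overline y$ is the mean of a simple random sample of the fixed residuals $y_j-m_j$, after centering. The finite-population CLT of Hájek (or Erdős--Rényi) therefore applies, provided the Lindeberg-type condition on the residuals holds. I would verify it from Assumption~\ref{B1}: compact error support gives bounded $y_j-m_j$. I also need $\limsup N^{-1}\sum(y_j-m_j)^2$ to be finite and bounded away from zero, which follows from the strict positivity of $\sigma^2$ in Assumption~\ref{B3}.

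This establishes the antecedent of Theorem~\ref{thm:normality}, which then yields the stated convergence with the simplified estimator. The main obstacle is confirming Assumption~\ref{B7} for SRSWOR, and there I would lean on the literature.
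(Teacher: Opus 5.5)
Your route is essentially the paper's. The paper also reduces the corollary to Theorem~\ref{thm:normality} via a finite-population CLT for the SRSWOR difference estimator $\overline{y}^*$. It cites Thompson's Theorem~3.2, after noting $\limsup_N N^{-1}\sum_{j\in U}(y_j-m_j)^4<\infty$ as the Lindeberg-type input; you use uniformly bounded residuals instead. It takes the closed form of the variance estimator from Breidt and Opsomer's Corollary~1 rather than computing it. Your Step~1 is superfluous, since Assumptions~\ref{B6}--\ref{B7} are already hypotheses of the corollary. Your remark that the residual variance must stay bounded away from zero is a genuine point that the paper leaves implicit. It is what makes the CLT nondegenerate and what lets Theorem~\ref{thm:variance} yield $\widehat{\mathrm{Var}}/AMSE\to 1$. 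Two justifications need tightening. Boundedness of $y_j-m_j$ also needs the uniform boundedness of $m_j$ (Lemma~\ref{Lemme2}(iii)), not only compact error support. The lower bound needs Lemma~\ref{Lemme6} combined with a law of large numbers for $N^{-1}\sum_{j\in U}\epsilon_j^2$, not just positivity of $\sigma^2$.

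The one real error is the constant in Step~2. With your $f=n/N$, for $i\neq j$ one has $1-\pi_i\pi_j/\pi_{ij}=-(N-n)/\{N(n-1)\}$. So the off-diagonal weight in the general estimator of Theorem~\ref{thm:variance} is
\[
\frac{1}{N^2}\cdot\frac{N^2}{n^2}\cdot\left\{-\frac{N-n}{N(n-1)}\right\}=-\frac{1-f}{n^2(n-1)},
\]
not $-(1-f)/\{n(n-1)\}$. Combined with the diagonal weight $(1-f)/n^2$, which you have right, this gives
\[
\widehat{\mathrm{Var}}(\widetilde{y}_{MA})=\frac{1-f}{n^2(n-1)}\Big\{(n-1)\sum_{s}e_i^2-\sum_{i\neq j\in s}e_ie_j\Big\}=\frac{1-f}{n(n-1)}\Big\{\sum_{s}e_i^2-n^{-1}\Big(\sum_{s}e_i\Big)^2\Big\},
\]
which is exactly the stated form. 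You should therefore drop the fallback that the two forms agree up to a factor $1+O(1/n)$. It is not argued, and with the coefficient you wrote it is false. The coefficient of $\sum_s e_i^2$ would be roughly doubled, and the $(\sum_s e_i)^2$ term would be inflated by a factor $n$. The reduction has to be the exact one above.
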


\begin{proof}
	Under Assumptions~\ref{B1}--\ref{B7} and by Lemma~\ref{Lemme2}$\left(\textnormal{iv}\right)$, $\limsup_{N \to \infty}\sum_{j \in U}(y_j - m_j)^4/N < \infty$. The variance estimator formula follows from \citet[Corollary~1]{breidt2000local}, and asymptotic normality of \eqref{y*} follows from \citet[Theorem~3.2]{Thompson1997}. The result then follows from Theorem~\ref{thm:normality}.
\end{proof}

We further establish that $\widetilde{y}_{MA}$ is robust in the sense that it asymptotically attains the Godambe--Joshi (GJ) lower bound.

\begin{theorem}\label{thm:robustess}
	Under Assumptions~\textnormal{\ref{B1}--\ref{B7}}, $\widetilde{y}_{MA}$ asymptotically attains the GJ lower bound, in the sense that
	\[
	n \mathbb{E}\left(\widetilde{y}_{MA}-\overline{y}\right)^{2} = \frac{n}{N^{2}}\sum_{j \in U} \sigma^2(x_j) \frac{1-\pi_j}{\pi_j} + o(1).
	\]
\end{theorem}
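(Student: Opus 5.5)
The plan is to take the expectation in the statement as the joint expectation $\mathbb{E} = \mathbb{E}_\xi\mathbb{E}_p$ over model and design, and to start from Theorem~\ref{thm:MSE}.

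\textbf{Step 1 (reduce to the generalized difference term).} Theorem~\ref{thm:MSE} gives, $\xi$-almost surely,
\[
n\,\mathbb{E}_p(\widetilde{y}_{MA}-\overline{y})^2=\mathbb{E}_p(L_N^2)+o(1), \qquad \mathbb{E}_p(L_N^2)=\frac{n}{N^2}\sum_{i,j\in U}(y_i-m_i)(y_j-m_j)\frac{\pi_{ij}-\pi_i\pi_j}{\pi_i\pi_j}.
\]
To take $\mathbb{E}_\xi$ of the remainder, I would show that the $o(1)$ term is uniformly integrable under $\xi$. The bounds in the proof of Theorem~\ref{thm:MSE} reduce it to quantities such as $(\lambda^{-1}+\lambda^{-2}n\max|\pi_{ij}-\pi_i\pi_j|)\,N^{-1}\sum_j (y_j-m_j)^2$ and the $\mathbb{E}_p(M_N^2)$ bound of Lemma~\ref{Lemme5}. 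Because the errors have compact support (Assumption~\ref{B1}) and $m$ is continuous on the compact set $\mathcal{I}$ (Assumptions~\ref{B2}--\ref{B3}), these are bounded uniformly in $N$. Hence dominated convergence applies, and $n\,\mathbb{E}(\widetilde{y}_{MA}-\overline{y})^2=\mathbb{E}_\xi\mathbb{E}_p(L_N^2)+o(1)$.

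\textbf{Step 2 (replace $m_j$ by $m(x_j)$).} Write $y_j-m_j=\epsilon_j+\{m(x_j)-m_j\}$. By Theorem~\ref{thrm}, the population smoother satisfies $N^{-1}\sum_j \mathbb{E}_\xi\{m_j-m(x_j)\}^2\to0$, since both the bias $O(A_2)+O(1/(Nh^{r_2}))$ and the variance $O(1/(Nh^{r_2}))$ vanish. The quadratic form with weights $(\pi_{ij}-\pi_i\pi_j)/(\pi_i\pi_j)$ is bounded in operator norm by $n^{-1}N\cdot C$ under Assumption~\ref{B6}, by the same bound as in the proof of Theorem~\ref{thm:MSE}. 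The Cauchy--Schwarz inequality then shows that the cross terms and the $m_j-m(x_j)$ terms contribute $o(1)$ after multiplication by $n/N^2$. So it suffices to evaluate
\[
\frac{n}{N^2}\sum_{i,j}\mathbb{E}_\xi(\epsilon_i\epsilon_j)\frac{\pi_{ij}-\pi_i\pi_j}{\pi_i\pi_j}.
\]

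\textbf{Step 3 (independence kills off-diagonal terms).} Since the $\epsilon_i$ are independent with mean zero, only $i=j$ survives. On the diagonal, $\pi_{jj}=\pi_j$ and $\mathbb{E}_\xi\epsilon_j^2=\sigma^2(x_j)$, which gives exactly $\frac{n}{N^2}\sum_j\sigma^2(x_j)(1-\pi_j)/\pi_j$, the Godambe--Joshi bound.

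\textbf{Main obstacle.} The delicate step is Step 2. The population fit $m_j$ depends on all the $\epsilon_k$ with kernel weights, so $m_j-m(x_j)$ is correlated with $\epsilon_j$. The difficulty is to control $\mathbb{E}_\xi[\epsilon_i\{m(x_j)-m_j\}]$, summed against the off-diagonal design weights, which are only $O(1/n)$ each but number $N^2$.

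I would handle this by writing $m_j-m(x_j)=\sum_k \ell_{jk}\epsilon_k+b_j$, where $\ell_{jk}$ are the linear smoother weights and $b_j$ is the deterministic bias. This gives $\mathbb{E}_\xi[\epsilon_i(m_j-m(x_j))]=\ell_{ji}\sigma^2(x_i)$, with $\ell_{ji}=O\{(Nh^{r_2})^{-1}\}$ by Assumption~\ref{A4} and the kernel bound of Theorem~\ref{thrm}. Summed over $i,j$ with weights $n\max|\pi_{ij}-\pi_i\pi_j|=O(1)$, this yields $O(1/(Nh^{r_2}))\to0$ under Assumption~\ref{A5}. The diagonal contributions are handled similarly, using $\lambda\le\pi_j$.
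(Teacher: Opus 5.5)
Your Steps 1--3 are correct and follow the paper's route, which defers to Theorem~5 of Breidt and Opsomer (2000): Lemma~\ref{Lemme5} is applied uniformly over $\xi$-realizations (the errors have compact support) to discard $M_N$ and the $L_NM_N$ cross term, Lemma~\ref{Lemme6} with Cauchy--Schwarz replaces $m_j$ by $m(x_j)$, and independence of the $\epsilon_j$ leaves only the diagonal Godambe--Joshi term. The ``main obstacle'' you flag is not an obstacle: with $d_j=m(x_j)-m_j$ and $B(a,b)=\frac{n}{N^2}\sum_{i,j}a_ib_j(\pi_{ij}-\pi_i\pi_j)/(\pi_i\pi_j)$ satisfying $|B(a,b)|\le (C/N)\|a\|\,\|b\|$, your Step~2 Cauchy--Schwarz bound already gives $\mathbb{E}_\xi B(\epsilon,d)\to 0$ whatever the correlation between $\epsilon_i$ and $m_j$, so the linear-smoother computation is superfluous (and its claim $\ell_{ji}=O\{(Nh^{r_2})^{-1}\}$ does not follow from Assumption~\ref{A4}, which bounds $\|K_{x_j,h}\|_2^2$ rather than $\sup_t K_{x_j,h}(t)$; such an argument would instead need $\sum_i|\ell_{ji}|=O(1)$).
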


\begin{proof}
	The result follows from \citet[Theorem~5]{breidt2000local}, taking into account Lemma~\ref{Lemme6}, Lemma~\ref{Lemme5} for all $i \in U$ and $s$ and across realizations from $\xi$, Assumption~\ref{B3}, and the Cauchy--Schwarz inequality.
\end{proof}

Bandwidth selection is carried out by minimizing the survey cross-validation criterion of \citet{opsomer2005selecting}:
\begin{equation}
	LSCV(h) = \frac{1}{n}\sum_{i,j \in s}\frac{\pi_{ij}-\pi_i \pi_j}{\pi_{ij}}\frac{y_i - \widehat{m}^1_{-i}}{\pi_i}\frac{y_j - \widehat{m}^1_{-j}}{\pi_j},
	\label{CV(h)_2}
\end{equation}
where $\widehat{m}^1_{-j}$ denotes the estimator $\widehat{m}^1_j$ \eqref{m^11_j} computed with observation $(x_j, y_j)$ left out, for $j \in s$. The finite-sample performance of $\widetilde{y}_{MA}$ under the three kernels of Example~1 is assessed in Section~\ref{sec:ELLL}.

\section{Simulations and illustration}\label{sec:ELLL}
  
In this section, we provide numerical results to evaluate the performance 
of the proposed model-assisted estimator of the mean $\tilde{y}_{MA}$ 
\eqref{y_tilde_MA} using local linear regression \eqref{m^11_j} with asymmetric gamma kernels \eqref{Gam}. Comparisons are also made 
with the symmetric Gaussian \eqref{gaus} and Epanechnikov kernels \eqref{epan} and also the HT estimator \eqref{bar_y_HT}. All computations were performed using \textsf{R} software 
\citep{R20} on a computer equipped with an Intel Core i5 processor 
(2.30~GHz) with 4~GB of RAM. The optimal bandwidth is selected by 
minimizing the cross-validation criterion \eqref{CV(h)_2}; see 
\cite{opsomer2005selecting}.

The performance of the mean estimators is assessed through the following 
criteria:
\begin{equation}\label{MSE}
\widehat{MSE} = \frac{1}{C}\sum_{c=1}^{C}\left(\widetilde{y}_{MA}(s_c) - \overline{y}\right)^{2},
\end{equation}
\begin{equation}\label{RB and ER}
RB\!\left(\widetilde{y}_{MA}\right) = \frac{1}{C}\sum_{c=1}^{C}
\left|\frac{\widetilde{y}_{MA}(s_c) - \overline{y}}{\overline{y}}\right|,
\qquad
RE\!\left(\widetilde{y}_{MA}\right) = 
\frac{\displaystyle\sum_{c=1}^{C}
	\left(\widetilde{y}_{MA}(s_c) - \overline{y}\right)^{2}}
{\displaystyle\sum_{c=1}^{C}
	\left(\widehat{\overline{y}}_{\mathrm{HT}}(s_c) - \overline{y}\right)^{2}},
\end{equation}
where $C = 1000$ unless otherwise stated is the number of Monte Carlo replications. The relative error $\textit{RE}$ measures the gain over the HT 
estimator: values below~1 (or below~100\%) indicate an improvement.

\subsection{Simulation design}\label{subsec:Simulations}

We consider eight regression functions (scenario), among those introduced in \citet{breidt2000local} and \citet{somes2024}, defined on their respective domains as follows:
\begin{align*}
	& \text{Linear:}\ m_{1}(x) = \left\{1 + 2(x-0.5)\right\}\mathds{1}_{[0,1]}(x), \\
	& \text{Quadratic:}\ m_{2}(x) = \left\{1 + 2(x-0.5)^{2} \right\}\mathds{1}_{[0,1]}(x), \\
	& \text{Jump:}\ m_{3}(x) = \left[ \left\{1 + 2(x-0.5)\right\}\mathds{1}_{\{x < 0.65\}} + 0.65\,\mathds{1}_{\{x \geq 0.65\}} \right]\mathds{1}_{[0,1]}(x), \\
	& \text{CDF:}\ m_{4}(x) = \Phi\!\left(\frac{1.5 - 2x}{\sigma_0}\right)\mathds{1}_{[0,1]}(x),\quad \text{with } \sigma_0 = 1, \\
	& \text{Exponential:}\ m_{5}(x) = \exp(-8x)\mathds{1}_{[0,\infty)}(x), \\
	& \text{Mixture density 1:}\ m_{6}(x) = \left( 
	\frac{3}{10}\frac{x e^{-3x}}{3^{-2}\Gamma(2)} 
	+ \frac{2}{5}\frac{x^7 e^{-3x}}{3^{-8}\Gamma(8)} 
	+ \frac{3}{10}\frac{x^{10} e^{-3x}}{3^{-11}\Gamma(11)} 
	\right)\mathds{1}_{[0,\infty)}(x), \\
	& \text{Weibull density:}\ m_{7}(x)= \frac{3}{2}\left(\frac{x}{2}\right)^2 e^{-(x/2)^3}\mathds{1}_{[0,\infty)}(x), \\
	& \text{Mixture density 2:}\ m_{8}(x) = \left( 
	\frac{1}{3}\frac{x e^{-2.5x}}{2.5^{-2}} 
	+ \frac{1}{3}\frac{2.5^8 x^7 e^{-2.5x}}{7!} 
	+ \frac{1}{3}\frac{2.5^{25} x^{24} e^{-2.5x}}{24!} 
	\right)\mathds{1}_{[0,\infty)}(x).
\end{align*}

These functions are chosen to represent key challenges in nonparametric regression, including smoothness, discontinuities, boundary effects, and asymmetric supports. 
They cover a broad range of shapes: linear and quadratic ($m_1$, $m_2$), piecewise linear with a jump discontinuity at $x=0.65$ ($m_3$), a cumulative distribution function ($m_4$), exponential decay ($m_5$), and mixture-type densities on $[0,\infty)$ ($m_6$, $m_7$, $m_8$).
The auxiliary variable $x$ is generated from i.i.d.\ $\mathrm{Uniform}(0,1)$ random variables for $m_1$--$m_5$ and $\mathrm{Uniform}(0,6.5)$ for $m_6$--$m_8$; this truncation ensures computational efficiency while preserving the essential mass of the distributions.

The model errors $\varepsilon$ are i.i.d.\ $\mathit{N}(0,\sigma^2)$, independent of $x$, with noise levels $\sigma \in \{0.1,\,0.3,\,0.5\}$. 
A finite population of size $N=1000$ is generated once, and samples of sizes $n \in \{50,100,300\}$ are drawn by SRSWOR, with first-order inclusion probabilities $\pi_i = n/N$.

For each combination $( \sigma, n )$, $C=1000$ independent samples are drawn from the same population. 
The three estimators are computed, and $\widehat{\mathrm{MSE}}$, $\textit{RB}$, and $\textit{RE}$ are evaluated empirically.

Tables~\ref{Ratios des MSE_LLR}--\ref{Ratios des ER_Y_MA} summarize the empirical performances of the three model-assisted estimators under different regression functions, sample sizes and noise levels. Overall, no kernel uniformly dominates across all scenarios. This is expected since the simulation study includes both smooth symmetric functions and asymmetric functions defined on positive supports.

A preliminary remark on theoretical appropriateness is in order. 
All eight regression functions are defined on supports that are either compact $[0,1]$ or semi-infinite $[0,\infty)$, so that the auxiliary variable $x$ is strictly non-negative in every scenario. 
Symmetric kernels such as Gaussian \eqref{gaus} and Epanechnikov \eqref{epan}, when centered near the left boundary $x=0$, assign positive mass outside the support, thereby introducing well-known boundary bias. 
The  gamma kernel, by construction, is the only among the three that is structurally compatible with the support of the data. Numerical results should be interpreted in this light.

Table~\ref{Ratios des MSE_LLR} reports the empirical mean squared errors. For the smooth functions $m_1$ and $m_4$, the three kernels provide very similar performances, although the Gaussian and Epanechnikov kernels often give slightly smaller $\widehat{\mathrm{MSE}}$ values. The case of $m_2$ is particular because the regression function is symmetric around an interior point of the support. In this situation, symmetric kernels are naturally favoured, which explains the better performances of the Gaussian and Epanechnikov kernels.

For the asymmetric regression functions $m_5$--$m_8$, the gamma kernel generally remains competitive and becomes more advantageous in several configurations, especially for moderate and large noise levels. Although it does not systematically produce the smallest $\widehat{\mathrm{MSE}}$, its performances remain relatively stable across most scenarios. By contrast, the Epanechnikov kernel occasionally produces larger values and higher Monte Carlo variability for some mixture-density functions, particularly for $m_6$, $m_7$ and $m_8$.

Table~\ref{Ratios des BR_Y_MA} reports the relative biases. The three kernels yield biases of comparable magnitude in many configurations, especially as the sample size increases. As expected, the relative biases decrease with increasing sample size. For the asymmetric functions $m_5$--$m_8$, the gamma kernel often gives slightly smaller or more stable relative biases, particularly at $n=100$ and $n=300$.

The clearest differences appear in Table~\ref{Ratios des ER_Y_MA}, which reports the relative efficiencies with respect to the Horvitz--Thompson estimator. Several asymmetric scenarios show that the gamma kernel remains below the $100\%$ threshold while the symmetric kernels may exceed it. For example, for $m_6$ with $n=100$ and $\sigma=0.5$, the gamma kernel gives $\textit{RE}=97.54\%$, whereas the Gaussian and Epanechnikov kernels yield $101.00\%$ and $102.50\%$, respectively. Similar patterns are observed for $m_8$ under large noise levels. These results suggest that the gamma kernel provides more stable performances when the auxiliary variable is non-negative and the regression function is strongly asymmetric.

Overall, the simulation results show that the advantage of the gamma kernel is not universal, but becomes more visible for asymmetric and boundary-sensitive regression functions. In such situations, its support-adapted structure helps reduce some of the instability observed with symmetric kernels.


\begin{landscape} 
\begin{table}[!htbp]
	\centering
	\caption{Estimated mean squared errors, reported as $10^3\times \widehat{MSE}$, with standard deviations in parentheses over $C=1000$ replications. The smallest value in each row (for each $\sigma$) is shown in bold.}
	\label{Ratios des MSE_LLR}
	\small
	\setlength{\tabcolsep}{4pt}
	\renewcommand{\arraystretch}{1.05}	
	\begin{tabular}{c l rrr rrr rrr}
		\toprule
		& & \multicolumn{3}{c}{$\sigma=0.1$}
		& \multicolumn{3}{c}{$\sigma=0.3$}
		& \multicolumn{3}{c}{$\sigma=0.5$} \\
		\cmidrule(lr){3-5} \cmidrule(lr){6-8} \cmidrule(lr){9-11}
		$n$ & Function 
 	&	$\widehat{\mathrm{MSE}}_{\mathcal{G}}$ & $\widehat{\mathrm{MSE}}_{\mathcal{N}}$ & $\widehat{\mathrm{MSE}}_{\mathcal{E}}$
 	&	$\widehat{\mathrm{MSE}}_{\mathcal{G}}$ & $\widehat{\mathrm{MSE}}_{\mathcal{N}}$ & $\widehat{\mathrm{MSE}}_{\mathcal{E}}$
 	&	$\widehat{\mathrm{MSE}}_{\mathcal{G}}$ & $\widehat{\mathrm{MSE}}_{\mathcal{N}}$ & $\widehat{\mathrm{MSE}}_{\mathcal{E}}$\\
 		\midrule
 		\multirow{8}{*}{50}
 		& $m_1$  & 0.20 (0.28) &  \textbf{0.19 (0.26)}& 0.19 (0.27)  & \textbf{1.70 (2.39)}& 1.72 (2.42) & 1.74 (2.44) & \textbf{4.76 (6.70)} & 4.79 (6.74) &  4.83 (6.79)\\
 		& $m_2$  & 0.67 (0.96) & 0.20 (0.30) & \textbf{0.20 (0.29) }&  2.15 (3.16) & 1.81 (2.60) & \textbf{1.80 (2.56) } & 5.19 (7.53) & \textbf{4.95 (7.06)} & 4.96 (7.04) \\
 		& $m_3$  &  0.49 (0.69) &\textbf{0.27 (0.38)}& 0.85 (1.08)  & 1.99 (2.78)  & \textbf{1.87 (2.70)} & 1.88 (2.73) & 5.01 (6.99)& 4.98 (7.14)& \textbf{4.96 (7.14)} \\
 		& $m_4$  & 0.22 (0.34)& \textbf{0.19 (0.27)}& 0.19 (0.27)&1.75 (2.49)&\textbf{1.73 (2.42)}& 1.74 (2.45)&\textbf{4.78 (6.77)}&4.80 (6.73)& 4.83 (6.81) \\
 		& $m_5$  &0.59 (0.84) & \textbf{0.22 (0.35)}& 0.23 (0.37)&2.08 (3.07)& \textbf{1.87 (2.81)}& 1.88 (2.82)&5.08 (7.41)&\textbf{5.02 (7.47)}& 5.10 (7.54) \\
 		& $m_{6}$ & 0.25 (0.35) &\textbf{0.24 (0.34)} & 0.26 (0.55) & \textbf{1.80 (2.53)}& 1.89 (3.41)& 2.19 (10.48) & \textbf{4.85 (6.82)} &4.97 (7.39) &6.00 (28.88)  \\
 		& $m_{7}$ &0.69 (0.91)& \textbf{0.25 (0.36)}&0.49 (7.22)& 2.13 (2.90)&\textbf{2.01 (3.63)}&2.58 (12.91)&\textbf{5.18 (7.12)}& 5.43 (9.85)& 6.16 (28.74)\\
 		& $m_{8}$ & \textbf{0.21 (0.30)}&0.23 (0.40)&0.30 (1.46)&\textbf{1.73 (2.45)}& 1.80 (2.64) & 2.17 (10.41)&\textbf{4.76 (6.78)}& 4.88 (6.84) & 5.99 (28.78) \\

 		\midrule
 		
 		\multirow{8}{*}{100}
 		& $m_1$  & 0.09 (0.15)  & \textbf{0.08 (0.12)}  & 0.08 (0.12) & 0.78 (1.14) & \textbf{0.77 (1.11)}& 0.78 (1.12) & \textbf{2.14 (3.13)} & 2.16 (3.10) & 2.19 (3.12) \\
 		& $m_2$  & 0.30 (0.44) & 0.09 (0.13)& \textbf{0.09 (0.13)} & 1.00 (1.46) & 0.79 (1.14) & \textbf{0.79 (1.13)} & 2.39 (3.47) & 2.21 (3.17)& \textbf{2.20 (3.12)}  \\
 		& $m_3$  &  0.22 (0.31)& \textbf{0.10 (0.15)}  & 0.13 (0.49) & 0.91 (1.32) & \textbf{0.83 (1.20)} & 0.85 (3.31)& 2.29 (3.31) &\textbf{2.23 (3.22)} & 2.28 (3.35)\\
 		& $m_4$  & 0.09 (0.14)& \textbf{0.08 (0.12)} & 0.08	(0.12)&0.78 (1.14)&\textbf{0.77 (1.12)}&0.78 (1.13)& \textbf{2.15 (3.14)} &2.16 (3.11) & 2.17 (3.13) \\
 		& $m_5$  &0.26 (0.38) &\textbf{0.09 (0.13)}&0.10 (0.42)&0.96 (1.40)&\textbf{0.79 (1.15)}&0.81 (1.18)&2.57 (3.42)& 2.22 (3.19)& \textbf{2.22 (3.16)}\\
 		& $m_{6}$ & 0.10 (0.15) &  \textbf{0.09 (0.14)}& 0.10 (0.14)& \textbf{0.78 (1.14)} & 0.81 (1.20) & 0.81 (1.19) & \textbf{2.14 (3.11) }&2.21 (3.24) & 2.25 (3.29)  \\
 		& $m_{7}$ &0.26 (0.38) &\textbf{0.09 (0.14)}& 0.69 (0.14)&0.95 (1.37)&\textbf{0.84 (1.24)}&0.85 (1.25)&2.30 (3.36) &2.29 (3.38)& \textbf{2.28 (3.31)} \\
 		& $m_{8}$ &\textbf{0.09 (0.14)} & 0.09 (0.14)&0.09 (0.14)&\textbf{0.78 (1.14)}&0.81 (1.18)&0.81 (1.18)&\textbf{2.15 (3.10)}&2.21 (3.23)& 2.24 (3.25) \\
 		
 		\midrule
 		
 		\multirow{8}{*}{300}
 		& $m_1$  &  0.11 (0.15) &  \textbf{0.10 (0.14)}& 0.10 (0.14)  & \textbf{0.21 (0.29)}  & 0.21 (0.29)  &  0.22 (0.29) & \textbf{0.63 (0.89)} & 0.63 (0.91) &  0.63 (0.91)\\
 		& $m_2$  & 0.08 (0.12) & \textbf{0.02 (0.03)} & 0.02 (0.03)& 0.26 (0.37)& \textbf{0.20 (0.28)}& 0.20 (0.28)& 0.65 (0.86) & \textbf{0.61 (0.81)} & 0.61 (0.81)  \\
 		& $m_3$  & 0.05 (0.08)&0.02 (0.03)& \textbf{0.02 (0.03)} & 0.24 (0.33)&  \textbf{0.21 (0.31)}& 0.22 (0.32) &0.61 (0.85)& \textbf{0.61 (0.86)} & 0.61 (0.86) \\
 		& $m_4$  & 0.02 (0.04)&0.02 (0.03)& \textbf{0.02 (0.03)}&0.21 (0.30)&\textbf{0.21 (0.30)}&0.21 (0.30)&\textbf{0.59 (0.83)}&0.60 (0.84)& 0.60 (0.84) \\
 		& $m_5$  &0.07 (0.10) & 0.02 (0.03)& \textbf{0.02 (0.03)}&0.25 (0.34)&\textbf{0.22 (0.30)}&0.22 (0.30)&0.64 (0.90)& \textbf{0.60 (0.85)}& 0.60 (0.85) \\
 		& $m_{6}$  & 0.03 (0.04) &0.02 (0.03) &\textbf{0.02 (0.03)} & 0.22 (0.30) & 0.22 (0.31)  & \textbf{0.22 (0.31)} &\textbf{0.61 (0.83)}  & 0.60 (0.85) & 0.61 (0.86) \\
 		& $m_{7}$ &0.07 (0.10) &0.02 (0.03)&\textbf{0.02 (0.03)}& 0.26 (0.38)&\textbf{0.22 (0.31)}&0.22 (0.31)&0.33 (0.49)& \textbf{0.29 (0.42)}& 0.29 (0.42) \\
 		& $m_{8}$ &0.02 (0.03) & 0.02 (0.03)&\textbf{0.02 (0.03)}&\textbf{0.23 (0.32)}&0.23 (0.33)&0.23 (0.33)&\textbf{0.56 (0.79)}& 0.57 (0.79)& 0.58 (0.80) \\
 		
 		\botrule
	\end{tabular}
\end{table}
\end{landscape}

\begin{landscape}
\begin{table}[!htbp]
	\centering
	\caption{Relative biases, reported as $10^2 \times \textit{RB}$, with standard deviations in parentheses.
 	 For each $\sigma$, the smallest value in each row is shown in bold.}
	\label{Ratios des BR_Y_MA}
	\small
	\setlength{\tabcolsep}{2.5pt}
	\renewcommand{\arraystretch}{1.20}
	\begin{tabular}{c l rrr rrr rrr}
		\toprule
		& & \multicolumn{3}{c}{$\sigma=0.1$}
		& \multicolumn{3}{c}{$\sigma=0.3$}
		& \multicolumn{3}{c}{$\sigma=0.5$} \\
		\cmidrule(lr){3-5} \cmidrule(lr){6-8} \cmidrule(lr){9-11}
			$n$ & Function 
 	&	$\textit{RB}_{\mathcal{G}}$ & $\textit{RB}_{\mathcal{N}}$ & $\textit{RB}_{\mathcal{E}}$
 	&	$\textit{RB}_{\mathcal{G}}$ & $\textit{RB}_{\mathcal{N}}$ & $\textit{RB}_{\mathcal{E}}$
 	&	$\textit{RB}_{\mathcal{G}}$ & $\textit{RB}_{\mathcal{N}}$ & $\textit{RB}_{\mathcal{E}}$\\
 		\midrule
 		
 		\multirow{8}{*}{50}
 		& $m_1$  & 1.12 (0.86) & \textbf{1.09 (0.84)} & 1.10 (0.84) & \textbf{3.28 (2.51)} & 3.29 (2.54) & 3.31 (2.54) & \textbf{5.47 (4.22)} & 5.49 (4.23) & 5.52 (4.24) \\
 		& $m_2$  & 1.79 (1.32) & 0.97 (0.75) & \textbf{0.96 (0.75)} & 3.15 (2.43) & 2.88 (2.23) & \textbf{2.87 (2.23)} & 4.87 (3.79) & \textbf{4.76 (3.70)} & 4.77 (3.70) \\
 		& $m_3$  & 1.78 (1.33) &\textbf{1.31 (1.00)} & 1.52 (2.48) &  3.55 (2.71)& \textbf{3.41 (2.66)} & 3.42 (2.68) & 5.61 (4.33) &  4.59 (4.32)   &\textbf{5.57 (4.33)} \\
 		& $m_4$  &1.79 (1.38)  &1.68 (1.27)& \textbf{1.68 (1.27)}&4.98 (3.85) & \textbf{4.97 (3.78)}& 4.97 (3.80)& \textbf{8.21 (6.37)}& 8.29 (6.33)& 8.29 (6.35) \\
 		& $m_5$  & 15.56 (11.60)& \textbf{9.42 (7.53)}& 9.58 (7.69)&29.03 (22.48)& \textbf{27.28 (21.85)}& 27.37 (21.95)&45.47 (35.89)& \textbf{45.26 (35.51)}& 45.50 (35.12) \\
 		& $m_{6}$ &8.32 (6.24) & \textbf{7.98 (6.18)}&8.17 (6.83)& \textbf{22.21 (16.92)}&22.37 (17.62)& 22.90 (21.37) &\textbf{36.54 (28.40)}& 36.83 (28.79)&38.03 (37.06) \\
 		& $m_{7}$ & 13.93 (10.06) &\textbf{8.25 (6.16)}&8.56 (11.27)& 24.44 (18.20) & \textbf{22.86 (18.59)}& 23.79 (23.87)& 38.07 (29.22)& \textbf{37.82 (30.82)}& 38.48 (37.43)\\
 		& $m_{8}$ &\textbf{11.29 (8.51)} &11.66 (8.98)&11.73 (3.83) & \textbf{32.06 (25.11)}&32.90 (25.37)& 33.93 (32.59) &\textbf{53.98 (43.67)}&54.98 (43.74) &57.22 (58.91) \\
 		
 		\midrule
 		
 		\multirow{8}{*}{100}
 		& $m_1$  &0.74 (0.59) &  \textbf{0.73 (0.56)} & 0.74 (0.56) & \textbf{2.21 (1.71)}& 2.21 (1.69) & 2.22 (1.70) & \textbf{3.67 (2.82)} & 3.69 (2.83) & 3.70 (2.84) \\
 		& $m_2$  & 1.17 (0.91) & 0.64 (0.49) & \textbf{0.64 (0.49)} &2.14 (1.66) & 1.92 (1.46) & \textbf{1.91 (1.46)} & 3.32 (2.55) & 3.20 (2.44) & \textbf{3.20 (2.43)}  \\
 		& $m_3$  & 1.18 (0.91) &  \textbf{0.81 (0.64)}&0.87 (0.79)&2.42 (1.82)&\textbf{2.29 (1.76)}&2.32 (1.78)& 3.82 (2.89) & \textbf{3.76 (2.87)} & 3.79 (2.91)\\
 		& $m_4$  &1.17 (0.91) & \textbf{1.11 (0.84)}& 1.11 (0.85)&\textbf{3.31 (2.57)}&3.32 (2.54)&3.33 (2.55)&\textbf{5.52 (4.26)}& 5.54 (4.24)& 5.56 (4.27) \\
 		& $m_5$  & 10.38 (8.01)&\textbf{6.02 (4.69)}& 6.19 (5.35)&19.86 (15.35)&\textbf{18.10 (13.94)}&18.20 (14.13)&31.39 (24.38)& \textbf{30.53 (23.71)}&30.56 (23.80) \\
 		& $m_{6}$ &5.40 (4.17) & \textbf{5.19 (3.92)}&5.22 (3.92)&\textbf{14.62 (11.22)}&14.81 (11.50)&14.87 (11.45)& \textbf{24.25 (18.73)}&24.62 (19.16)& 24.73 (19.30) \\
 		& $m_{7}$ & 8.43 (6.40) & \textbf{5.09 (3.89)}&5.61 (15.28)& 15.96 (12.23)& \textbf{14.98 (11.58)}& 15.08 (11.63)& 24.95 (19.42) & \textbf{24.85 (19.43)}& 24.90 (19.24) \\
 		& $m_{8}$ &\textbf{7.58 (5.81)} &7.62 (5.78) &7.68 (5.79)&\textbf{21.66 (16.74)} &22.08 (17.12)&22.16 (16.98)&\textbf{36.53 (28.61)}& 36.99 (29.20)&37.13 (29.13) \\
 		
 		\midrule
 		
 		\multirow{8}{*}{300}
 		& $m_1$  & \textbf{3.88 (2.92)} & 3.88 (2.94) & 3.89 (2.94)& \textbf{1.16 (0.87)} & 1.16 (0.87) & 1.16 (0.87) &\textbf{2.01 (1.52)} & 2.02 (1.52) & 2.02 (1.52)  \\
 		& $m_2$  & 0.61 (0.47) & 0.33 (0.25) & \textbf{0.31 (0.25)} & 1.11 (0.83) & \textbf{0.98 (0.74)} & 0.98 (0.75) &1.76 (1.28) & \textbf{1.70 (1.25)} & 1.70 (1.25) \\
 		& $m_3$  &  0.57 (0.45) & 0.42 (0.32)& \textbf{0.41 (0.32)}& 1.22 (0.93)& \textbf{1.18 (0.90)}& 1.19 (0.92)& 1.96 (1.50) & \textbf{1.96 (1.50)} & 1.96 (1.50) \\
 		& $m_4$  &0.63 (0.48)&0.60 (0.46)&\textbf{0.60 (0.46)}&1.76 (1.33)& \textbf{1.76 (1.33)}& 1.77 (1.33)&\textbf{2.93 (2.20)}& 2.93 (2.21)& 2.94 (2.21) \\
 		& $m_5$  & 5.48 (4.27)&3.21 (2.41)& \textbf{3.20 (2.84)}&10.55 (7.72)& \textbf{9.66 (7.29)}& 9.71 (7.29)&16.61 (12.82)&\textbf{16.12 (12.56)}& 16.13 (12.57) \\
 		& $m_{6}$  &2.92 (2.16) & 2.65 (1.99)&\textbf{2.64 (1.97)}&7.86 (5.96)&\textbf{7.83 (5.97)} & 7.83 (5.95)&\textbf{12.97 (9.95)}&13.06 (9.99)& 13.10 (10.06)\\
 		& $m_{7}$ &4.36 (3.29) & 2.52 (1.94)&\textbf{2.51 (1.93)}&8.49 (6.49) & \textbf{7.87 (5.96)}& 7.89 (5.97)& 8.57 (6.59)& \textbf{7.59 (5.80)}& 7.61 (5.81) \\
 		& $m_{8}$ &3.89 (3.00) &\textbf{3.80 (2.90)}&3.82 (2.9) &\textbf{11.88 (9.19)} &11.95 (9.27)&12.01 (9.25)&\textbf{18.94 (15.21)} & 19.02 (15.26)& 19.18 (15.41)\\
 		
 		\botrule
	\end{tabular}
\end{table}
\end{landscape}

 \begin{table}[!htbp]
 	\centering
 	\caption{Relative errors (\%) for selected functions across sample sizes and noise levels. 
 	The smallest value in each row is shown in bold.}
 \label{Ratios des ER_Y_MA}
 	\setlength{\tabcolsep}{4pt}
 		\renewcommand{\arraystretch}{1.20}
 	\begin{tabular}{c l rrr rrr rrr}
 		\toprule
 		& & \multicolumn{3}{c}{$\sigma=0.1$} 
 		& \multicolumn{3}{c}{$\sigma=0.3$} 
 		& \multicolumn{3}{c}{$\sigma=0.5$} \\
 		\cmidrule(lr){3-5} \cmidrule(lr){6-8} \cmidrule(lr){9-11}
 	$n$ & Function 
 	&	$\textit{RE}_{\mathcal{G}}$ & $\textit{RE}_{\mathcal{N}}$ & $\textit{RE}_{\mathcal{E}}$
 	&$\textit{RE}_{\mathcal{G}}$ & $\textit{RE}_{\mathcal{N}}$ & $\textit{RE}_{\mathcal{E}}$
 	&$RE_{\mathcal{G}}$ & $\textit{RE}_{\mathcal{N}}$ & $\textit{RE}_{\mathcal{E}}$\\
 		\midrule
 		
 		\multirow{8}{*}{50}
 		& $m_1$  &3.22 & \textbf{3.08} & 3.11 & \textbf{22.42} & 22.70 & 22.87 & \textbf{45.47} & 45.80 & 46.14 \\
 		& $m_2$  & 111.92 & \textbf{34.12} & 34.74 & 104.80 & 87.93 & \textbf{87.59} & 104.13 & \textbf{99.35} & 99.54 \\
 		& $m_3$  & 8.03 &\textbf{4.40} & 13.89 & 27.32 &\textbf{25.66} &25.80  & 49.35 & 49.04   & \textbf{48.91} \\ 
 		& $m_4$  & 26.36& \textbf{23.01}&23.04&74.15&\textbf{72.99}&73.57&\textbf{89.25}&89.60&90.25  \\
 		& $m_5$  &55.39 &\textbf{21.15}& 21.97&81.90& \textbf{73.65}&74.11&92.39& \textbf{91.44}&92.75 \\
 		& $m_{6}$ &70.25 &\textbf{66.10}&73.51& \textbf{97.77}& 102.39 &118.85 & \textbf{100.62}& 103.24 & 124.56  \\
 		& $m_{7}$ & 71.51 & \textbf{25.66}& 51.12& 87.04 &\textbf{82.02}&105.16& \textbf{95.51}& 100.18& 113.67 \\
 		& $m_{8}$ & \textbf{71.42}&77.46& 101.12 &97.94&\textbf{96.55}&100.96&\textbf{100.17}&102.58&125.90 \\
 		
 		\midrule
 		
 		\multirow{8}{*}{100}
 		& $m_1$  & 3.11 & \textbf{2.95} & 2.97 & 21.89& \textbf{21.83} & 22.00 & \textbf{44.01} & 44.33 & 44.69 \\
 		& $m_2$  & 104.34 & 31.14 & \textbf{31.13} & 103.14 & 81.55 & \textbf{81.46} & 102.43 & 94.64 & \textbf{94.31} \\
 		& $m_3$  & 7.87 & \textbf{3.79}& 4.92&26.36& \textbf{24.02}&24.66& 47.76 &  \textbf{46.49}& 47.61 \\
 		& $m_4$  & 24.67& \textbf{21.92}&22.09&73.73&\textbf{73.48}&73.90&\textbf{88.34}& 88.47& 89.12 \\
 		& $m_5$  &51.18 & \textbf{17.42}&20.54&79.23& \textbf{65.96}& 66.83&90.60&86.14&\textbf{86.09} \\
 		& $m_{6}$ &67.48 &\textbf{61.48}&62.09&\textbf{93.84}&97.15&97.55& \textbf{97.54}&101.00&102.50 \\
 		& $m_{7}$ &64.56 &\textbf{23.62}&167.88& 87.69 &\textbf{77.53}&78.50& 94.60&94.19&\textbf{93.85} \\
 		& $m_{8}$ &\textbf{68.24} &68.44& 69.38 & \textbf{94.66}&98.48& 98.70& \textbf{98.35}&101.23& 102.35 \\
 		
 		\midrule
 		
 		\multirow{8}{*}{300}
 		& $m_1$  & \textbf{3.20} &3.24 &  3.25 & \textbf{20.45} & 22.54 & 22.59 & \textbf{47.83} & 48.33 & 48.41\\
 		& $m_2$  & 101.52 & \textbf{28.85} & 29.07 & 101.95 & \textbf{79.44} & 79.66 & 100.74 & \textbf{94.74} & 94.82 \\
 		& $m_3$  &  7.70 &4.08 & \textbf{3.94}& 27.43 &\textbf{25.55} &25.63 & 49.72 & \textbf{49.57} & 49.81 \\
 		& $m_4$  &26.00 &24.05&\textbf{24.00}&69.44&\textbf{69.32}&69.51&\textbf{87.38}& 87.69 & 87.95 \\
 		& $m_5$  & 56.74& \textbf{18.99}& 19.01& 81.75& \textbf{69.61} &64.96&87.39&86.47&\textbf{86.46} \\
 		& $m_{6}$  & 69.80& 57.92 &\textbf{57.27}& 95.04& 94.68&\textbf{94.60}&\textbf{96.76}&98.60& 99.54 \\
 		& $m_{7}$ &59.92 &20.15&\textbf{20.08}& 83.65& \textbf{71.52}& 71.76& 75.33& \textbf{54.52}&54.8 \\
 		& $m_{8}$ &70.49&66.90& \textbf{67.30}&\textbf{95.08}&96.43&97.00& \textbf{98.89}&99.56&101.32 \\

 		\botrule
 	\end{tabular}
 \end{table}

Across all performance criteria ($\textit{RE}$, $\widehat{MSE}$, and $\textit{RB}$) and the 72 scenarios considered, no single kernel uniformly dominates. The gamma kernel $\mathcal{G}$ accounts for (approximately 30.55\%) of the total number of wins, but this overall result masks a strong dependence on the support of the regression function.

\subsection{Illustrative application}\label{subsec:Illustrations}

We here illustrate the proposed approach on a real dataset using the same criteria as in the simulation study of Section~\ref{subsec:Simulations}. The dataset contains measurements on seeds from three wheat varieties: Kama, Rosa, and Canadian. It comprises $221$ observations on seven geometric variables including surface area, perimeter, compactness index, length, width, asymmetry coefficient, and groove length.\footnote{The data are publicly available from the UCI Machine Learning Repository at \url{https://archive.ics.uci.edu/ml/datasets/seeds}.} We model \texttt{seed length} as a function of \texttt{surface area}. Since both variables are strictly positive, the auxiliary variable is naturally supported on $[0,\infty)$, making the gamma kernel theoretically appropriate for this application.

Table~\ref{descriptive_application} reports some descriptive statistics of the data. Both variables exhibit asymmetric distributions, especially \texttt{seed length}, which presents a marked negative skewness. Such asymmetric configurations are consistent with the situations considered in the simulation study where support-adapted smoothing may provide more stable performances than symmetric kernels. The population of \texttt{seed length}
is plotted in Figure~\ref{fig}. 

The results are reported in Table~\ref{Ratios des MSE_LLR_application}. The gamma kernel achieves the smallest $\widehat{MSE}$ ($0.0071$) and the smallest relative bias ($0.0122$). It also provides the best relative efficiency with $\textit{RE}=0.9184$, indicating an improvement over the Horvitz--Thompson estimator. By contrast, the Gaussian and Epanechnikov kernels both produce relative efficiencies larger than $1$, meaning that they perform worse than the HT estimator in this application. These results are consistent with the simulation study, where the gamma kernel generally showed more stable performances for asymmetric regression functions defined on positive supports.

\begin{table}[!h]
		\caption{Descriptive statistics of \texttt{seed length} (\texttt{y}) and \texttt{surface area} (\texttt{x}) for the seeds dataset ($N=221$).}
		\begin{tabular*}{0.6\textwidth}{c l rrr rrr rrr}
			\toprule
		 & max. & min. & median & mean & sd & CS \\
			\midrule
			 \texttt{y} & 6.675 & 0.818 & 5.554 & 5.564 & 0.701 & $-3.940$ \\
			\midrule
			\texttt{x} &21.18 & 1.00 &14.16 & 14.29& 3.817 & $-0.929$ \\
			\botrule
		\end{tabular*}
		\label{descriptive_application}
\end{table}

\begin{table}[!h]
		\caption{Estimation of the population mean of \texttt{length} using the seed data under simple random sampling ($n=50$), based on $1000$ replications. Mean squared error ($\widehat{MSE}$), relative bias ($\textit{RB}$), and relative efficiency ($\textit{RE}$) with standard deviations in parentheses are reported.}
		\begin{tabular*}{0.85\textwidth}{c l rrr rrr rrr}
			\toprule
			& HT & Gamma & Gaussian & Epanechnikov \\
			\midrule
			$\widehat{MSE} \times 10^{3}$ & 7.73 (9.87) & \textbf{7.10 (10.25)} & 10.49 (33.37) & 13.97 (154.55)\\
			\midrule
			$\textit{RB} \times 10^{2}$ &  & \textbf{1.22 (0.89)} & 1.26 (1.34) & 1.34 (1.64)\\
			\midrule
			\textit{RE}  (\%) & 1 & \textbf{91.84} & 135.68 & 180.66 \\
			\botrule
		\end{tabular*}
		\label{Ratios des MSE_LLR_application}
\end{table}

\begin{figure}[bt]
\centering
\includegraphics[width=10cm]{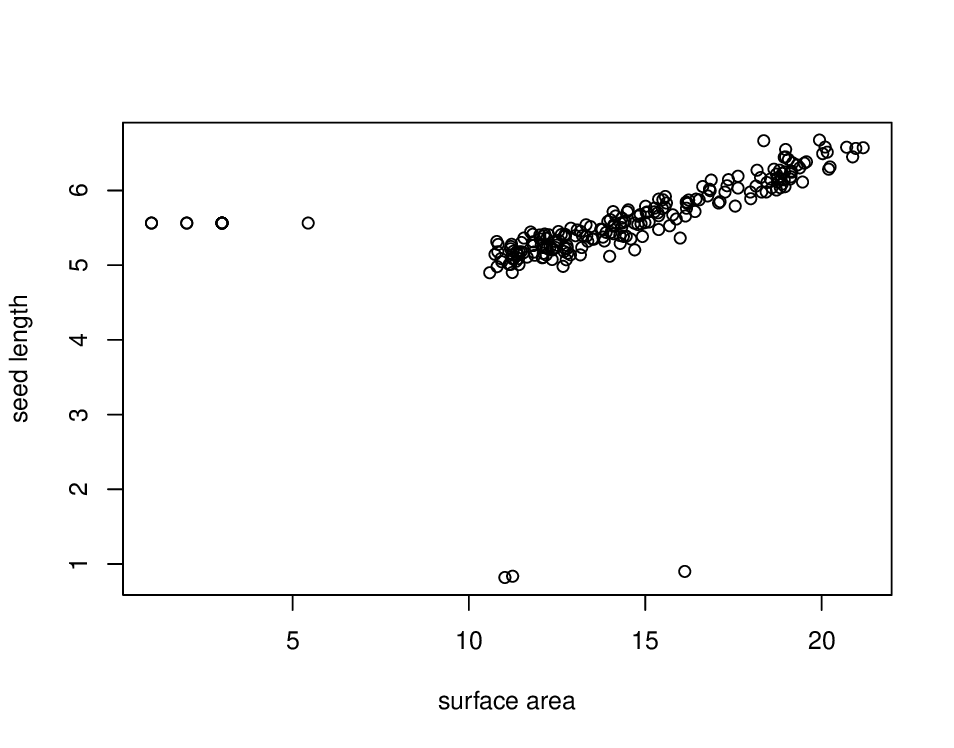}
\caption{Plot of the \texttt{seed length} population.}
\label{fig}
\end{figure}

\newpage
\section{Conclusion}\label{sec:Conclusion}
 
 This paper proposed a general model-assisted estimator of the finite population mean based on local linear nonparametric regression with associated kernels, a class that encompasses both classical symmetric
 kernels and asymmetric kernels. The asymptotic properties of the proposed estimator including design-unbiasedness, consistency, mean squared
 error, asymptotic normality, and attainment of the GJ lower bound were derived theoretically under a unified framework. The finite-sample performance of the estimator was evaluated through
 simulation studies and one real-data application, where comparisons were conducted with estimators based on the Gaussian and Epanechnikov kernels
 and with the HT estimator, focusing on the gamma kernel as a representative asymmetric member of the associated class.
 
 The results show that the gamma kernel generally improves estimation accuracy relative to symmetric kernels when the auxiliary variable is supported on a bounded or semi-infinite domain, with the advantage most
 pronounced at medium to large sample sizes and for regression functions with monotone or asymmetric structure. The competitive performance of symmetric kernels in several scenarios, in particular for the quadratic function symmetric about an interior point, should not be read as evidence of their general suitability for positive data: it primarily
 reflects the masking of boundary bias by sampling variance at small sample sizes. In practice, kernel choice should be guided by the support
 structure of the auxiliary variable, and the gamma kernel is the natural default when that support is the positive real line.
 
 Several directions for future research are worth pursuing. First, the proposed framework could be extended to stratified or cluster sampling
 designs, where the interplay between the sampling structure and the kernel smoother raises additional theoretical questions. Second, other
 asymmetric associated kernels, such as the beta kernel for data supported
 on a compact interval, could be investigated within the model-assisted
 framework, following the general asymptotic results established here.
 Third, a semi-parametric extension, in which some covariates enter the model parametrically and others nonparametrically, would broaden the
 applicability of the approach. Finally, the multivariate setting involving several auxiliary variables represents a natural and
 practically important generalization.
  \section{Appendix}\label{sec:Annexe}	

\begin{proof}[Proof of Theorem~\ref{thrm}]
	
	We work conditionally on the design points
	$x = (x_1,\ldots,x_n)$ throughout; all expectations,
	variances, and covariances are conditional on $x$ unless
	otherwise stated.
	
	We record two consequences used repeatedly below. 	
	First, by the variance decomposition and Assumption~\ref{A6}:
	\begin{equation}
		A_2(x_j,h) = B(x_j,h) + {A(x_j,h)}^2 = B(x_j,h)\{1+o(1) \}.
		\label{A2_decomp}
	\end{equation}
	
	Second, since $A_k(x_j,h) \to 0$ for all $k = 1,2$ as
	$h \to 0$ by ~\eqref{definition:associated_kernel},
	while Assumptions~\ref{A4}--\ref{A5} together imply
	$\|K_{x_j,h}\|_2^2 \to \infty$,
	we have, from \eqref{Akxh}:
	\begin{equation}
		\frac{n}{\pi_1^2}\,A_{l+1}(x_j,h)
		= o\!\left(\frac{n}{\pi_1^2}\,\|K_{x_j,h}\|_2^2\right)
		\quad\text{for all } l =0,1.
		\label{moments_vs_norm}
	\end{equation}

Then, the proof is done in five steps.
	
	
	\paragraph{Step 1: Bias decomposition}
	
	According to \citet[Theorem~1]{chen2002local}, the
	conditional expectation of
	$\widehat{\beta} = (\widehat{\beta}_0,\widehat{\beta}_1)^\top$
	satisfies:
	\[
	\mathbb{E}(\widehat{\beta} \mid x)
	= \left(\mathcal{X}_s^\top\mathcal{W}_s\mathcal{X}_s\right)^{-1}
	\mathcal{X}_s^\top\mathcal{W}_s\,\mathbb{E}(Y_s \mid x).
	\]
	Setting $\mathbb{E}(Y_s \mid x)
	= m = (m(x_1),\ldots,m(x_n))^\top$ and
	$\beta = (m(x_j),m^{(1)}(x_j))^\top$, we write
	$m = \mathcal{X}_s\beta + r$ where
	$r = m - \mathcal{X}_s\beta$ is the local polynomial
	residual. Then, one has:
	\[
	\operatorname{Bias}(\widehat{\beta} \mid x)
	= \left(\mathcal{X}_s^\top\mathcal{W}_s\mathcal{X}_s
	\right)^{-1}
	\mathcal{X}_s^\top\mathcal{W}_s\,r.
	\]
	
	
	\paragraph{Step 2: Asymptotic moments of $S_{nl}$}
	
	The weighted design matrix satisfies:
	\[
	\mathcal{X}_s^\top\mathcal{W}_s\mathcal{X}_s
	= \begin{pmatrix} S_{n0} & S_{n1} \\ S_{n1} & S_{n2}
	\end{pmatrix},
	\qquad
	S_{nl} = \sum_{i=1}^n \frac{1}{\pi_i}(x_i-x_j)^l
	K_{x_j,h}(x_i).
	\]
	Under Assumption~\ref{A1}, $\pi_i = n/N$ for all $i \in s$.
	Since the $x_i$ have density $f$ by Assumption~\ref{A2},
one successively has:
	\begin{align}
		\mathbb{E}(S_{nl})&=\frac{n}{\pi_1}
		\mathbb{E}\!\left\{(x_1-x_j)^l K_{x_j,h}(x_1)\right\}
		\nonumber\\ 
		&= \frac{n}{\pi_1}
		\int_{\mathbb{S}_{x_j,h}}(s-x_j)^l
		K_{x_j,h}(s)\,f(s)\,ds \nonumber \\
		&= \frac{n}{\pi_1}
		\mathbb{E}\!\left\{
		(\mathcal{Z}_{x_j,h}-x_j)^l f(\mathcal{Z}_{x_j,h})
		\right\},
		\label{E(Snl)}
	\end{align}
	where $x_j \in \mathbb{S}_{x_j,h}$ \eqref{Akxh}.

	To evaluate \eqref{E(Snl)} with  \eqref{definition:associated_kernel}, we apply the Taylor series expansion of $f(\mathcal{Z}_{x_j,h})$ around $\mathbb{E}(\mathcal{Z}_{x_j,h}) = x_j + A(x_j,h)$: \begin{align}
		f(\mathcal{Z}_{x_{j}, h}) & = f  \left \{x_{j} + A (x_{j}, h) \right \} + \left\{  \mathcal{Z}_{x_{j}, h} - x_{j} - A (x_{j}, h)  \right\} f^{\left(1\right)} \left \{ x_{j} + A (x_{j}, h)\right \} \nonumber\\
		& \  \  \  + \frac{1}{2} \left\{  \mathcal{Z}_{x_{j}, h} - x_{j} - A (x_{j}, h )  \right\}^{2} f^{\left(2 \right)} \left \{ x_{j} + A (x_{j}, h) \right \}  \label{e} \\
		& \  \  \  + o \left ( \left [  \mathcal{Z}_{x_{j}, h} - \left \{ x_{j} + A(x_{j}, h) \right \} \right ]^{2} \right ). \nonumber
	\end{align}
	
	Then using another expansion around $x_j$
	\begin{equation}
		\mathbb{E}\!\left\{
		(\mathcal{Z}_{x_j,h}-x_j)^l f(\mathcal{Z}_{x_j,h})
		\right\}
		= \sum_{k=0}^{2} f^{(k)}(x_j)\,\frac{A_{l+k}(x_j,h)}{k!}
		+ o\left\{A_{l+2}(x_j,h)\right\}.
		\label{expansion_Snl}
	\end{equation}
	Substituting into \eqref{E(Snl)} for $l = 0, 1, 2$
	and using \eqref{A2_decomp} and Assumption~\ref{A6}:
	\begin{align*}
		\mathbb{E}(S_{n0})
		&= \frac{n}{\pi_1}f(x_j)\{1+o(1)\}, \\
		\mathbb{E}(S_{n2})
		&= \frac{n}{\pi_1}f(x_j)A_2(x_j,h)\{1+o(1)\}, \\
		\mathbb{E}(S_{n1})
		&= \frac{n}{\pi_1}f(x_j)A(x_j,h)\{1+o(1)\}
		= o\!\left\{ \mathbb{E}(S_{n2})^{1/2}\right\}.
	\end{align*}
	Thus $S_{n1}$ is asymptotically negligible relative to the
	diagonal terms, and the expected design matrix satisfies:
	\[
	\mathbb{E}\!\left(
	\mathcal{X}_s^\top\mathcal{W}_s\mathcal{X}_s
	\right)
	\approx \frac{n}{\pi_1}f(x_j)
	\begin{pmatrix} 1 & 0 \\ 0 & A_2(x_j,h) \end{pmatrix}
	\{1+o(1)\},
	\] and 	\begin{equation}
		\left(\mathbb{E}\!\left(
		\mathcal{X}_s^\top\mathcal{W}_s\mathcal{X}_s
		\right)\right)^{-1}
		\approx \frac{\pi_1}{nf(x_j)}
		\begin{pmatrix} 1 & 0 \\ 0 & A_2(x_j,h)^{-1}
		\end{pmatrix}\{1+o(1)\},
		\label{(X'WX)}
	\end{equation}
	using $f(x_j) \geq f_c > 0$ of Assumption~\ref{A3} and
	$A_2(x_j,h) > 0.$

	
	\paragraph{Step 3: Residual and leading bias term}
	
	By Assumption~\ref{A2}, $m^{(2)}$ is continuous on
	$\mathbb{S}_{x_j,h}$, so a Taylor expansion of $m$
	around $x_j$ gives:
	\[
	m(x_i)
	= m(x_j) + m^{(1)}(x_j)(x_i-x_j)
	+ \tfrac{1}{2}m^{(2)}(x_j)(x_i-x_j)^2
	+ o\!\left\{(x_i-x_j)^2\right\}.
	\]
	Since $\mathcal{X}_s\beta
	= m(x_j)\mathbf{1} + m^{(1)}(x_j)(x_i-x_j)$ by
	definition with $\mathbf{1}=(1,\ldots,1)^{\top}$, the residual satisfies
	$r_i = \tfrac{1}{2}m^{(2)}(x_j)(x_i-x_j)^2
	+ o((x_i-x_j)^2)$.
	Substituting and recognising $S_{n2}$, $S_{n3}$
	via \eqref{E(Snl)}:
	\[
	\mathcal{X}_s^\top\mathcal{W}_s r
	= \frac{m^{(2)}(x_j)}{2}
	\begin{pmatrix} S_{n2} \\ S_{n3} \end{pmatrix}\{1+o(1) \}
	= \frac{m^{(2)}(x_j)}{2}\,\frac{nf(x_j)}{\pi_1}
	\begin{pmatrix} A_2(x_j,h) \\ A_3(x_j,h)
	\end{pmatrix} \{ 1+o(1) \}.
	\]
	Combining with \eqref{(X'WX)} yields an approximation of the mean bias:
	\begin{equation}
		e_1^\top
		\!\left(\mathbb{E}\!\left(
		\mathcal{X}_s^\top\mathcal{W}_s\mathcal{X}_s
		\right)\right)^{-1}
		\!\mathbb{E}\!\left(
		\mathcal{X}_s^\top\mathcal{W}_s r
		\right)
		= \frac{1}{2}\,m^{(2)}(x_j)\,A_2(x_j,h).
		\label{leading_bias}
	\end{equation}


\paragraph{Step 4: Stochastic remainder}

From Step~3, the bias satisfies:
\[
\operatorname{Bias}(\widehat{m}_j \mid x)
= \frac{m^{(2)}(x_j)}{2}
\cdot e_1^\top
\left(\mathcal{X}_s^\top\mathcal{W}_s\mathcal{X}_s
\right)^{-1}
\begin{pmatrix} S_{n2} \\ S_{n3} \end{pmatrix}
\{1+o(1) \}.
\]
Replacing $(\mathcal{X}_s^\top\mathcal{W}_s\mathcal{X}_s)^{-1}$
by its leading approximation \eqref{(X'WX)}, we obtain:
\begin{equation}
	\operatorname{Bias}(\widehat{m}_j \mid x)
	= \frac{m^{(2)}(x_j)}{2}
	\cdot \frac{S_{n2}}{S_{n0}}
	\cdot\{1+o(1) \}.
	\label{bias_ratio}
\end{equation}
It therefore suffices to expand the  ratio
$S_{n2}/S_{n0}$ around its deterministic counterpart
$\mathbb{E}(S_{n2})/\mathbb{E}(S_{n0})$.

\medskip
\noindent\textit{Bounding $\operatorname{Var}(S_{nl})$.}
One has $S_{nl} = \mathbb{E}(S_{nl}) + \Delta_{nl}$ with
$\Delta_{nl} = S_{nl} - \mathbb{E}(S_{nl})$.
Under Assumption~\ref{A1}:
\[
\operatorname{Var}(S_{nl})
= \frac{n}{\pi_1^2}\!\left\{
\int_{\mathcal{I}}(t-x_j)^{2l}
K_{x_j,h}^2(t)\,f(t)\,dt
- \left(
\int_{\mathcal{I}}(t-x_j)^{l}
K_{x_j,h}(t)\,f(t)\,dt
\right)^{\!2}
\right\}.
\]
From Assumption~\ref{A2},  $f(t) = f(x_j) + R(t)$ with
$|R(t)| \leq C|t-x_j|$ 
and denoting the two integrals $I_1 - I_2$ gives:

\begin{itemize}
	\item 
	By \citet[Theorem~1]{chen2002local}:
	\(
	\int_{\mathcal{I}}(t-x_j)^{2l}K_{x_j,h}^2(t)\,dt
	= \mathcal{O}(\|K_{x_j,h}\|_2^2),
	\)
	so $I_1 = \tfrac{n}{\pi_1^2}f(x_j)\,
	\mathcal{O}(\|K_{x_j,h}\|_2^2)\{1+o(1)\}$;
	
	\item and the rest $I_2$ is

	\[
	\frac{n}{\pi_1^2}
	\left|
	\int_{\mathcal{I}}(t-x_j)^{l}K_{x_j,h}(t)\,R(t)\,dt
	\right|
	\leq \frac{Cn}{\pi_1^2}\,A_{l+1}(x_j,h)
	= o\!\left(
	\frac{n}{\pi_1^2}\,\|K_{x_j,h}\|_2^2
	\right),
	\]
	using \eqref{moments_vs_norm} and since $|R(t)| \leq C|t-x_j|$.
\end{itemize}
 Thus, using $\pi_1 = n/N$ leads to:
\begin{equation}
	\operatorname{Var}(S_{nl})
	= \mathcal{O}\!\left(\frac{\|K_{x_j,h}\|_2^2}{n}\right),
	\quad \text{and} \quad
	\Delta_{nl}
	= \mathcal{O}_p\!\left\{
	\left(\frac{\|K_{x_j,h}\|_2^2}{n}
	\right)^{\!1/2}
	\right\}.
	\label{var_bound}
\end{equation}

\medskip
\noindent\textit{ Expansion of the ratio
	$S_{n2}/S_{n0}$.} A first-order Taylor expansion of $S_{n2}/S_{n0}$
around $(\mathbb{E}(S_{n2}), \mathbb{E}(S_{n0}))$
gives:
\[
\frac{S_{n2}}{S_{n0}}
= \frac{\mathbb{E}(S_{n2})}{\mathbb{E}(S_{n0})}
+ \frac{\Delta_{n2}}{\mathbb{E}(S_{n0})}
- \frac{\mathbb{E}(S_{n2})}{\mathbb{E}(S_{n0})^2}
\,\Delta_{n0}
+ R_n,
\]
where  $R_n$ is a second order remainder involving products
$\Delta_{nl_1}\Delta_{nl_2}$ and satifies
$o_p(\Delta_{n0}^2 + \Delta_{n2}^2)$.
From Step~2, $\mathbb{E}(S_{n0})
= \tfrac{n}{\pi_1}f(x_j)\{1+o(1)\} = Nf(x_j)\{1+o(1)\}$,
so the two fluctuation terms satisfy:
\[
\frac{\Delta_{n2}}{\mathbb{E}(S_{n0})}
= \mathcal{O}_p\!\left\{
\frac{1}{Nf(x_j)}
\left(\frac{\|K_{x_j,h}\|_2^2}{n}\right)^{\!1/2}
\right\},
\ \ 
\frac{\mathbb{E}(S_{n2})}{\mathbb{E}(S_{n0})^2}
\,\Delta_{n0}
= \mathcal{O}_p\!\left\{
\frac{A_2(x_j,h)}{Nf(x_j)}
\left(\frac{\|K_{x_j,h}\|_2^2}{n}\right)^{\!1/2}
\right\},
\]
using \eqref{var_bound} and the ratio
$\mathbb{E}(S_{n2})/\mathbb{E}(S_{n0})
= A_2(x_j,h)\{1+o(1)\}$ from Step~2.
Both terms are therefore of order:
\begin{equation}
	\mathcal{O}_p\!\left(
	\frac{\|K_{x_j,h}\|_2^2}{n}
	\right)
	= \mathcal{O}_p\!\left(\frac{1}{nh^{r_2}}\right),
	\label{fluct_bound}
\end{equation}
using Assumption~\ref{A4}.

\medskip

Substituting the deterministic ratio from Step~2,
\[
\frac{\mathbb{E}(S_{n2})}{\mathbb{E}(S_{n0})}
= A_2(x_j,h)\{1+o(1)\},
\]
and  \eqref{fluct_bound}
into \eqref{bias_ratio}:
\[
\operatorname{Bias}\!\left(\widehat{m}_j \mid x\right)
= \frac{1}{2}\,m^{(2)}(x_j)\,A_2(x_j,h)
+ o\!\left\{A_2(x_j,h)\right\}
+ \mathcal{O}\!\left(\frac{1}{nh^{r_2}}\right),
\]
which establishes~\eqref{Bias_m}.
	
	
	\paragraph{Step 5: Conditional variance}
	
	The conditional variance of $\widehat{\beta}$ follows from:
	\[
	\operatorname{Var}\!\left(\widehat{\beta} \mid x\right)
	= \left(\mathcal{X}_s^\top\mathcal{W}_s\mathcal{X}_s
	\right)^{-1}
	\mathcal{X}_s^\top\mathcal{W}_s^2\,
	\operatorname{Var}(Y_s \mid x)\,
	\mathcal{X}_s
	\left(\mathcal{X}_s^\top\mathcal{W}_s\mathcal{X}_s
	\right)^{-1},
	\]
	where $\mathcal{W}_s^2
	= \operatorname{diag}\{\pi_i^{-2}K_{x_j,h}^2(x_i)\}$
	and $\operatorname{Var}(y_i \mid x_i) = \sigma^2(x_i)$.
	Defining $\Psi_s
	= \operatorname{diag}\{\pi_i^{-2}K_{x_j,h}^2(x_i)
	\sigma^2(x_i)\}$:
	\[
	\operatorname{Var}\!\left(\widehat{m}_j \mid x\right)
	= e_1^\top
	\left(\mathcal{X}_s^\top\mathcal{W}_s\mathcal{X}_s
	\right)^{-1}
	\mathcal{X}_s^\top\Psi_s\mathcal{X}_s
	\left(\mathcal{X}_s^\top\mathcal{W}_s\mathcal{X}_s
	\right)^{-1}
	e_1.
	\]
	Set $\mathcal{X}_s^\top\Psi_s\mathcal{X}_s
	= \bigl(\begin{smallmatrix}
		T_{n0} & T_{n1} \\ T_{n1} & T_{n2}
	\end{smallmatrix}\bigr)$
	with $T_{nl} = \sum_{i=1}^n \pi_i^{-2}(x_i-x_j)^l
	K_{x_j,h}^2(x_i)\sigma^2(x_i)$.
	By Assumption~\ref{A2}, $\sigma^2$ is Lipschitz on
	$\mathbb{S}_{x_j,h}$, so
	$|\sigma^2(x_i) - \sigma^2(x_j)| \leq L|x_i-x_j|$
	for some $L < \infty$. Replacing $\sigma^2(x_i)$ by
	$\sigma^2(x_j)$ introduces an error bounded by
	$L\,A_1(x_j,h) \to 0$, which is of smaller order than the
	leading term. Applying the same argument
	as in \eqref{E(Snl)} with $K_{x_j,h}^2$:
	\[
	\mathbb{E}(T_{nl})
	= \frac{n}{\pi_1^2}\,\sigma^2(x_j)\,f(x_j)\,
	\nu_l(x_j)\{1+o(1)\},
	\qquad
	\nu_l(x_j)
	= \int_{\mathbb{S}_{x_j,h}}(t-x_j)^l
	K_{x_j,h}^2(t)\,dt,
	\]
	with $\nu_0(x_j) = \|K_{x_j,h}\|_2^2$ in particular. 
	As for $S_{n1}$, using
	$A_1(x_j,h) \to 0$ and $\|K_{x_j,h}\|_2^2 \to \infty$
	from \eqref{moments_vs_norm}, the off-diagonal term $T_{n1}$
	is asymptotically negligible, giving:
	\[
	\mathbb{E}\!\left(
	\mathcal{X}_s^\top\Psi_s\mathcal{X}_s
	\right)
	\approx \frac{n}{\pi_1^2}\,\sigma^2(x_j)\,f(x_j)\,
	S^*\{1+o(1)\},
	\qquad
	S^* = \begin{pmatrix}
		\nu_0(x_j) & \nu_1(x_j) \\
		\nu_1(x_j) & \nu_2(x_j)
	\end{pmatrix}.
	\]
	Substituting \eqref{(X'WX)} and setting
	$D = \operatorname{diag}(1,A_2(x_j,h)^{-1})$:
	\begin{align*}
		\operatorname{Var}(\widehat{m}_j \mid x)
		&\approx
		\frac{\pi_1}{nf(x_j)}\,D
		\cdot\frac{n}{\pi_1^2}\,\sigma^2(x_j)f(x_j)\,S^*
		\cdot\frac{\pi_1}{nf(x_j)}\,D\,\{1+o(1)\} \\[4pt]
		&= \frac{\sigma^2(x_j)}{nf(x_j)}\,
		[D\,S^*\,D]\,\{1+o(1)\}.
	\end{align*}
	Since $e_1^\top M e_1 = M_{11}$ for any matrix $M$,
	only the $(1,1)$ entry contributes. Computing explicitly:
	\[
	[D\,S^*\,D]{11} = \nu_0(x_j) = \|K_{x_j,h}\|_2^2.
	\]
	Therefore:
	\[
	\operatorname{Var}\!\left(\widehat{m}_j \mid x\right)
	= \frac{\sigma^2(x_j)}{nf(x_j)}\,\|K_{x_j,h}\|_2^2
	+ o\!\left(\frac{\|K_{x_j,h}\|_2^2}{n}\right)
	= \frac{\sigma^2(x_j)}{nf(x_j)}\,\|K_{x_j,h}\|_2^2
	+ o\!\left(\frac{1}{nh^{r_2}}\right),
	\]
	where the last equality uses Assumption~\ref{A4}.
	This establishes~\eqref{Var_m} and completes the proof.
\end{proof}

\begin{lemma}\label{Lemme6}
Under Assumptions~\textnormal{\ref{B1}-\ref{B5}}, 
\[ \lim_{N \longrightarrow \infty}^{} \frac{1}{N} \sum_{j \in U}^{} \mathbb{E} \left( m_{j}-m(x_{j}) \right)^{2} = 0. \]
\end{lemma}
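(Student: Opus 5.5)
We prove Lemma~\ref{Lemme6} by adapting \citet[Lemma~2]{breidt2000local}, which treats the population-level local linear fit $m_j$ with a classical kernel. Two changes are needed. Every kernel moment must be written through the centred moments $A_k(x_j,h)$ of the associated kernel instead of $h^k\mu_k$, and the variance scaling $(Nh)^{-1}$ must be replaced by $\|K_{x_j,h}\|_2^2/N$. Here $m_j$ is a function of the population quantities $\overline{y}_j$ only, and the $x_i$ are fixed (Assumption~\ref{B2}). The expectation $\mathbb{E}$ is therefore with respect to $\xi$ alone, which is exactly the setting of Theorem~\ref{thrm} with the sample replaced by the whole population, $\pi_i\equiv 1$ and $n$ replaced by $N$.

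\emph{Step 1: pointwise bias--variance split.} For each $j\in U$ we write
\[
\mathbb{E}\{m_j-m(x_j)\}^2=\{\mathbb{E}(m_j)-m(x_j)\}^2+\operatorname{Var}(m_j),
\]
computing both terms conditionally on the fixed design. Applying Theorem~\ref{thrm} with $\pi_i=1$, $n=N$ gives two facts. The bias is $\tfrac12 m^{(2)}(x_j)A_2(x_j,h)+o\{A_2(x_j,h)\}+\mathcal{O}\{(Nh^{r_2})^{-1}\}$, and the variance is $\sigma^2(x_j)\|K_{x_j,h}\|_2^2/\{Nf(x_j)\}+o\{(Nh^{r_2})^{-1}\}$. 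Since $A_2(x_j,h)\to0$ by \eqref{definition:associated_kernel} and Assumption~\ref{A6}, and $Nh^{r_2}\ge nh^{r_2}\to\infty$ by Assumptions~\ref{A4}--\ref{A5}, each summand tends to zero.

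\emph{Step 2: uniformity over $j$ and averaging.} The $1/N$ average tends to zero once we have a bound $\sup_{j}\mathbb{E}\{m_j-m(x_j)\}^2\le \epsilon_N\to0$, or at least an integrable dominating bound, so that dominated convergence applies through the empirical distribution of the $x_j$, which approximates $f$. For this we use three facts. First, $m$, $m^{(2)}$ and $\sigma^2$ are bounded on the compact $\mathcal{I}$ (Assumption~\ref{B3}). Second, $f\ge f_c>0$ there (Assumption~\ref{A3}). Third, the errors have uniformly compact support (Assumption~\ref{B1}), so $|y_k|\le C$ for all $k$. Together these give a crude deterministic bound $|m_j|\le C'$ whenever the local design determinant $\overline{y}_{j1}\overline{y}_{j3}-\overline{y}_{j2}^2$ stays away from zero. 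The contributions from points where it does not, or where the expansions of Step 2 of Theorem~\ref{thrm} fail, we plan to control as a vanishing fraction of $U$.

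\emph{Main obstacle.} The delicate step is uniformity near the boundary of the support. For asymmetric kernels, $A_2(x_j,h)$ and $\|K_{x_j,h}\|_2^2$ depend on $x_j$; for the gamma kernel, $\|K\|_2^2\asymp h^{-1}$ at $x_j=0$ but $\asymp h^{-1/2}x_j^{-1/2}$ in the interior. We therefore plan to split $U$ into boundary points, with $x_j$ within distance $\eta_N\to0$ of $\partial\mathcal{I}$, and interior points. On the interior the bounds of Step 1 hold uniformly, since Assumption~\ref{A4} gives $\sup_j c_2(x_j)<\infty$ on compacts. On the boundary set we use only the crude bound $\mathbb{E}\{m_j-m(x_j)\}^2\le C''$, so its contribution is at most $C''\,\#\{j:\ x_j\text{ near boundary}\}/N\to0$. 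If the determinant cannot be bounded below uniformly, we would fall back, as in \citet{breidt2000local}, on the ridge adjustment $\delta/N^2$ of \eqref{m^11_j}, which keeps $m_j$ bounded at the cost of an $\mathcal{O}(N^{-1})$ perturbation.
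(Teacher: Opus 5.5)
Your proposal follows the paper's route. The paper's proof simply invokes Theorem~\ref{thrm} at the population level (standard local linear theory, cf.\ \citet[Theorem~1]{chen2002local}) to make the pointwise bias and variance vanish, which is your Step~1, and like the paper you must borrow hypotheses of Theorem~\ref{thrm} that are not among Assumptions~\ref{B1}--\ref{B5}, notably the rate in Assumption~\ref{A5}. Your Step~2 supplies the averaging over $j$ that the paper leaves implicit, and it works provided two things hold: you take $\eta_N\to0$ slowly enough relative to $h$ and $N$, since Assumption~\ref{A4} is only pointwise (for the gamma kernel $c_2(x_j)\asymp x_j^{-1/2}$), and you take the crude boundary bound from the uniform boundedness of $m_j$ in Lemma~\ref{Lemme2}(iii), because the ridge term $\delta/N^2$ enters only $\widehat m^1_j$, not $m_j$.
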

\begin{proof}Following \citet[Lemma 6]{breidt2000local}, by Theorem \ref{thrm}, this follows directly from standard local polynomial regression theory; see \citet[Theorem 1]{chen2002local}.
\end{proof}

\begin{lemma}\label{Lemme2}
Assume \textnormal{\ref{B1}-\ref{B6}}:
\begin{enumerate}
\item [$\left(\textnormal{i}\right)$]  For $k \geq 0,\ \lim_{N \longrightarrow \infty}^{} \sup \frac{1}{N} \sum_{j \in U}^{} \left( \frac{1}{2Nh} \sum_{i \in U}^{} \mathds{1}_{ \left\{x_{j}-h \leq x_{i} \leq x_{j}+h \right\}} \right)^{k} < \infty.$
\item  [$\left(\textnormal{ii}\right)$] $\exists\ N^{\ast}\ \text{tel que}\ N \geq N^{\ast} \Rightarrow \sum_{k \in U}^{} \mathds{1}_{\left\{ \left| x-x_{k} \right| \leq h \right\}} \geq 2.$
\item  [$\left(\textnormal{iii}\right)$] Les $\overline{y}_{jg},\ m_{j}$ are uniformly bounded at $j$ and the $\widehat{\overline{y}}_{jg},\ \widehat{m}_j$ in $j$ and $s$.
\item [$\left(\textnormal{iv}\right)$] The first four partial derivatives of $\widehat{m}_j$ by considering $\overline{y}_{jg}\ \text{and}\ \delta,$ evaluated $\text{at}\ \widehat{\overline{y}}_{j} = \overline{y}_{j},\ \delta = 0$, are uniformly bounded at $j.$
\item [$\left(\textnormal{v}\right)$] Les $R^{2}_{jN}$ are uniformly bounded at $j$ and $s.$ 
\end{enumerate}
\end{lemma}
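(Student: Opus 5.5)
The plan follows \citet[Lemma~2]{breidt2000local}, which proves the same five statements for the uniform-window local linear smoother. The one new ingredient is that every quantity involving the associated kernel $K_{x_j,h}$ must be controlled through Assumption~\ref{B4} and the compactness in Assumption~\ref{B2}. I will treat the items in the order (i), (ii), (iii), (iv), (v), since each later item uses the earlier ones.

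For (i), note that $(2Nh)^{-1}\sum_{i\in U}\mathds{1}_{\{|x_i-x_j|\le h\}}$ is a population histogram estimate of $f(x_j)$. Since the $x_i$ are fixed, I need the population to behave like a regular sample from $f$ on the compact set $\mathcal{I}$. Taking this as holding $\xi$-almost surely, a Glivenko--Cantelli argument on the empirical distribution function $F_N$ gives
\[
\sup_j \frac{1}{2Nh}\sum_{i\in U}\mathds{1}_{\{|x_i-x_j|\le h\}}\le \sup_{\mathcal I} f+o(1).
\]
Here I need $\sup_x|F_N(x)-F(x)|=o(h)$, which requires $Nh\to\infty$ at a suitable rate; this follows from Assumption~\ref{B5} once $nh^{r_2}\to\infty$ is strengthened accordingly. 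A uniform bound on the inner quantity immediately bounds the average of its $k$th powers.

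For (ii), I use $f>0$ on the compact set $\mathcal I$ together with $Nh\to\infty$. These give $N\{F(x+h)-F(x-h)\}\ge 2Nh\inf f\to\infty$ uniformly in $x$, and combined with the uniform convergence of $F_N$ this shows that at least two $x_k$ lie in every window once $N\ge N^\ast$.

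For (iii), I use the compact support of the errors (Assumption~\ref{B1}) and the boundedness of $m$ on $\mathcal I$ (Assumption~\ref{B3}), which together make the $y_k$ uniformly bounded. Each $\overline{y}_{jg}$ is a kernel-weighted average of $(x_k-x_j)^{a}y_k^{b}$ with $a\le 2$ and $b\le1$. Each such term is bounded by $\sup_j N^{-1}\sum_k K_{x_j,h}(x_k)$, which is $\mathcal O(1)$ by the same Glivenko--Cantelli step used in (i), since $K_{x_j,h}$ is a density. The hat versions carry the extra factor $\mathds{1}_k/\pi_k\le\lambda^{-1}$ by Assumption~\ref{B6}. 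To bound $m_j$ and $\widehat m_j$ I bound the ratio itself. The denominator $\overline{y}_{j1}\overline{y}_{j3}-\overline{y}_{j2}^2$ is a weighted variance of the $x_k-x_j$, and (ii) guarantees it is strictly positive. The ridge term $\delta/N^2$ then keeps the denominator of $\widehat m^1_j$ away from zero uniformly in $s$, and the numerator is a weighted combination of bounded $y_k$. So each estimate is a weighted average of bounded responses with bounded weights, which is the standard way of bounding a local linear estimate.

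For (iv), the derivatives of the rational map with respect to $\overline{y}_{jg}$ and $\delta$ are again rational functions. Their numerators are polynomials in the bounded quantities from (iii), and their denominators are powers of the same weighted variance, so uniform boundedness follows once that variance is bounded below uniformly in $j$.

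For (v), $R_{jN}$ is by definition the Taylor remainder in \eqref{Rj}. Bounding it is therefore immediate from (iii), (iv) and the mean value form of the remainder.

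The main obstacle is obtaining a \emph{uniform} lower bound on the weighted variance denominator. With the indicator window in \citet{breidt2000local} this follows from (ii) together with a rescaling by $h^2$. For an associated kernel, the comparable scale is $A_2(x_j,h)$ rather than $h^2$. I would therefore normalize the denominator by $\{N^{-1}\sum_k K_{x_j,h}(x_k)\}^2A_2(x_j,h)$ and show, via the population analogue of Step~2 in the proof of Theorem~\ref{thrm}, that this normalized quantity converges to $f(x_j)^2\{1+o(1)\}$ uniformly in $j$. That convergence is the key intermediate bound. If uniformity in $j$ fails near the boundary for a particular kernel, I would fall back on the ridge regularization to control the denominator.
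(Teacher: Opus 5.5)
\paragraph{Main gap: the bound uniform in $s$.} The step that fails is the bound on $\widehat m^1_j$ uniformly in $s$ in (iii). Item (v), and through it Lemma~\ref{Lemme3}, rest on it. Your normalization argument is explicitly about the population quantities $\overline{y}_{jg}$, so it at most controls $m_j$. For the $\widehat{\overline{y}}_{jg}$ such a statement can hold only with high design probability, never for every sample, so for $\widehat m^1_j$ you fall back on the ridge. That does not work. In the normalized coordinates, \eqref{m^11_j} reads $\widehat m^1_j=\{(\widehat{\overline{y}}_{j3}+\epsilon_N)\widehat{\overline{y}}_{j4}-\widehat{\overline{y}}_{j2}\widehat{\overline{y}}_{j5}\}/\{(\widehat{\overline{y}}_{j1}+\epsilon_N)(\widehat{\overline{y}}_{j3}+\epsilon_N)-\widehat{\overline{y}}_{j2}^2\}$ with $\epsilon_N=\delta/N^3$. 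The ridge therefore guarantees only a denominator of at least $\epsilon_N^2\to 0$, and any ridge $\delta N^{-a}$ behaves the same way. It makes $\widehat m^1_j$ well defined for every $s$, but it does not keep the denominator away from zero uniformly in $s$ and $N$, contrary to your claim. Concretely, take the Epanechnikov kernel with $n/N\to\pi<1$. There are samples whose only units in $[x_j-h,x_j+h]$ are two nearby units on the same side of $x_j$. The fitted local line is then a secant with slope of order $|y_k-y_l|/|x_k-x_l|$, evaluated at distance of order $h$, and the vanishing ridge caps it only at a level that grows with $N$.

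So (iii) and (v), uniformly in $s$, cannot be obtained from the definitions alone. You need an extra device, for instance truncating $\widehat m^1_j$ at a fixed level exceeding $\sup_j|m_j|$, which is natural since the $y_k$ are bounded under Assumption~\ref{B1}. The paper's proof offers no way around this either: it simply asserts ``non-zero denominators''. Relatedly, (v) should be deduced from (iii), (iv) and the definition \eqref{Rj} by the triangle inequality. The mean value form you invoke would need derivative bounds along the whole segment from $(\overline{y}_j,0)$ to $(\widehat{\overline{y}}_j,\delta)$, which is the same problem again.

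\paragraph{Scaling in (iv), and unstated conditions.} Item (iv) has a scaling problem that your last paragraph only half addresses. The population determinant satisfies $\overline{y}_{j1}\overline{y}_{j3}-\overline{y}_{j2}^2\approx f(x_j)^2B(x_j,h)\to 0$, so it is not bounded below. Normalizing it does not make derivatives in the raw coordinates bounded. At a point where $\overline{y}_{j2}=0$ one finds $\partial^2 m_j/(\partial\overline{y}_{j2})^2=2\overline{y}_{j4}/(\overline{y}_{j1}^2\overline{y}_{j3})$. This is of order $m(x_j)/\{f(x_j)^2A_2(x_j,h)\}$ and diverges whenever $m(x_j)\neq 0$. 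So (iv) holds only after rescaling $\overline{y}_{j2},\overline{y}_{j5}$ by $B(x_j,h)^{-1/2}$ and $\overline{y}_{j3}$ by $B(x_j,h)^{-1}$, and \eqref{Rj} and Lemma~\ref{Lemme3} must then be redone in those coordinates. Normalize by the kernel variance $B$ rather than by $A_2$. The latter needs Assumption~\ref{A6} uniformly in $j$, which is not among \ref{B1}--\ref{B6} and fails for the gamma kernel at $x_j=\mathcal{O}(h)$.

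On the kernel sums, your route differs usefully from the paper's. The paper dominates $K_{x_j,h}(x_k)$ by $(c/h)\mathds{1}_{\{|x_k-x_j|\le h\}}$, which is false for the Gaussian and gamma kernels; your empirical-process argument does cover them. It does, however, need conditions you leave unstated:
\begin{itemize}
\item the $x_i$ must be i.i.d.\ from $f$, which is not in Assumption~\ref{B2};
\item $\sup_x|F_N(x)-F(x)|=o(h)$ almost surely requires something like $Nh^2/\log\log N\to\infty$. This is not implied by Assumption~\ref{B5}, and $nh^{r_2}\to\infty$ is Assumption~\ref{A5}, outside the lemma's hypotheses;
\item being a density does not control $\int K_{x_j,h}\,d(F_N-F)$. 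What does is a bound of order $h^{-1}$, uniform in $j$, on the total variation of $K_{x_j,h}$. This holds for the three kernels of Example~1 but is not part of Assumption~\ref{B4}.
\end{itemize}
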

\begin{proof}
For $\left(\textnormal{i}\right),\ \left(\textnormal{ii}\right),$ see \citet[Lemma 2 $\left(\textnormal{i}\right),\ \left(\textnormal{ii}\right)$]{breidt2000local}.

For $\left(\textnormal{iii}\right),\ \left(\textnormal{iv}\right),\ \left(\textnormal{v}\right),$ by assuming \textnormal{\ref{B1}-\ref{B6}},
\begin{align*}
\lim_{N \longrightarrow \infty}^{} \sup \left| \widehat{\overline{y}}_{jg} \right| & = \lim_{N \longrightarrow \infty}^{} \sup \left| \sum_{k \in U}^{}\frac{1}{N} K_{x_{j},h}\left(x_{k}\right) (x_{k}-x_{j})^{l_{1}}{y_{k}}^{l_{2}} \frac{\mathds{1}_k}{\pi_k} \right| \\
&  \leq \lim_{N \longrightarrow \infty}^{} \sup \sum_{k \in U}^{}\frac{c}{Nh \lambda} \mathds{1}_{\left\{ x_{j}-h \leq x_{k} \leq x_{j}+h \right\}},
\end{align*}
is uniformly bounded at $j$ as well as $\overline{y}_{jg},\ \text{for}\ \lambda < 1$, following \citet[Lemma 2 $\left(\textnormal{iii} \right)$]{breidt2000local}.
Similarly, $m_j,\ \widehat{m}^{1}_j$ and their derivatives are respectively continuous functions of $\overline{y}_{jg},\ \widehat{\overline{y}}_{jg}$, with non-zero denominators. Using \eqref{Rj}, the quantities $R^{2}_{jN}$ are uniformly bounded in $ j $ and $ s $. 
\end{proof}

\begin{lemma}\label{Lemme3}
Assuming \textnormal{\ref{B1}-\ref{B7}}, according to \eqref{Rj},
\[ \frac{n}{N} \sum_{i \in U}^{} \mathbb{E}_{p} \left( R^{2}_{jN} \right) = O \left( \frac{1}{nh^{2}} \right). \]
\end{lemma}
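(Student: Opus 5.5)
Following \citet[Lemma~3]{breidt2000local}, the plan is to treat $R_{jN}$ in \eqref{Rj} as the remainder of a second-order Taylor expansion of the smooth map $(\widehat{\overline{y}}_j,\delta)\mapsto \widehat{m}^1_j = f(\widehat{\overline{y}}_j,\delta)$ about $(\overline{y}_j,0)$. Writing $\widehat{\overline{y}}_{jg}-\overline{y}_{jg} = N^{-1}\sum_{k\in U}K_{x_j,h}(x_k)z^+_{jgk}(\mathds{1}_k/\pi_k-1)$, the linear term of the expansion is exactly the sum subtracted in \eqref{Rj}. The $\delta$-term is also subtracted there. Hence
\[
R_{jN}=\tfrac12\,(\widehat{\overline{y}}_j-\overline{y}_j,\ \delta/N^2)\,H_j^{\ast}\,(\widehat{\overline{y}}_j-\overline{y}_j,\ \delta/N^2)^\top ,
\]
with $H_j^\ast$ the Hessian at an intermediate point. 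On the event where the denominator $\widehat{\overline{y}}_{j1}\widehat{\overline{y}}_{j3}-\widehat{\overline{y}}_{j2}^2$ stays away from zero, Lemma~\ref{Lemme2}(iii)--(iv) shows that $H_j^\ast$ is uniformly bounded in $j$ and $s$. This gives
\[
R_{jN}^2\le C\Big\{\sum_{g=1}^5(\widehat{\overline{y}}_{jg}-\overline{y}_{jg})^4+\delta^4N^{-8}\Big\}.
\]
The complementary event, where the denominator is small, is handled with Lemma~\ref{Lemme2}(v), since $R^2_{jN}$ is uniformly bounded. Its design probability is controlled by Markov's inequality applied to the same fourth moments.

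The core step is to bound $\mathbb{E}_p(\widehat{\overline{y}}_{jg}-\overline{y}_{jg})^4$. Set $a_k=K_{x_j,h}(x_k)z^+_{jgk}/\pi_k$. Then
\[
\mathbb{E}_p(\widehat{\overline{y}}_{jg}-\overline{y}_{jg})^4 = N^{-4}\sum_{k_1,\dots,k_4}a_{k_1}\cdots a_{k_4}\,\mathbb{E}_p\prod_{r=1}^4(\mathds{1}_{k_r}-\pi_{k_r}).
\]
We split this sum by the pattern of coincident indices. The all-distinct tuples are controlled by the first condition in \ref{B7}, which contributes $n^{-2}$. Tuples of the form $(k,k,k_3,k_4)$ use the third condition in \ref{B7} ($n^{-1}$). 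Pairs $(k,k,l,l)$ and fully coincident tuples are bounded using \ref{B6} together with $\pi_k\ge\lambda$.

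In each case we use $|a_k|\le c\,h^{-1}\mathds{1}_{\{|x_k-x_j|\le h\}}/\lambda$, which holds for bounded-support kernels. For general associated kernels we would instead use $\|K_{x_j,h}\|_\infty\lesssim h^{-1}$ together with the compact-support errors in \ref{B1}. Sums of the form $N^{-1}\sum_i\mathds{1}_{\{|x_k-x_j|\le h\}}$ are of order $h$ by Lemma~\ref{Lemme2}(i). Combining these, each pattern contributes $O\{(nh)^{-2}\}$ uniformly in $j$. For example, the all-distinct term is of order $N^{-4}(Nh)^4h^{-4}n^{-2}$ multiplied by the design covariance bound, which is at most of this order. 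The $\delta^4N^{-8}$ term is negligible.

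Averaging over $j\in U$ and multiplying by $n/N\to\pi$ (from \ref{B5}) then yields $\frac nN\sum_j\mathbb{E}_p(R_{jN}^2)/N = O\{(nh)^{-2}\}\cdot O(1)$. Since $n/N$ is bounded, this is at most $O(1/(nh^2))$, which is the stated rate; the factor $1/N$ is implicit in the statement as in \citet{breidt2000local}. We expect the main obstacle to be the uniform Hessian bound near a vanishing denominator for asymmetric kernels such as the gamma kernel, whose support is not $[x_j-h,x_j+h]$. The first fix to try is to truncate via Lemma~\ref{Lemme2}(ii) and (v), and to replace the indicator bounds with the norm bound \ref{A4}. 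This may change the power of $h$, but it still stays within $h^{-2}$.
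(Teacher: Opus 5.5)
You follow the same route as the paper and Breidt--Opsomer. A quadratic Taylor remainder gives $R_{jN}^2\lesssim\sum_g(\widehat{\overline{y}}_{jg}-\overline{y}_{jg})^4+\delta^4N^{-8}$, and the fourth design moments are then split by coincidence pattern using \ref{B6}, \ref{B7} and Lemma~\ref{Lemme2}(i). You keep the signed expectation $\mathbb{E}_p\prod_r(\mathds{1}_{k_r}-\pi_{k_r})$, which is what lets \ref{B7} deliver the factors $n^{-2}$ and $n^{-1}$. The paper's displayed chain puts the absolute value inside $\mathbb{E}_p$, so on its own it only gives $Q_N=O(1)$.

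The genuine error is in your last paragraph: there is no implicit factor $1/N$. The left-hand side is $\frac{n}{N}\sum_{j\in U}\mathbb{E}_p(R^2_{jN})=n\cdot\frac{1}{N}\sum_{j\in U}\mathbb{E}_p(R^2_{jN})$, exactly as in Breidt--Opsomer. (The index $i$ is a typo for $j$; read literally it gives $n\,\mathbb{E}_p(R^2_{jN})$, again with no extra $1/N$.)

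Your intermediate display $\frac{n}{N^2}\sum_j\mathbb{E}_p(R^2_{jN})=O\{(nh)^{-2}\}$ is correct, and multiplying it by $N\asymp n$ is precisely the lemma. Instead, you weaken it to $O\{1/(nh^2)\}$ because ``$n/N$ is bounded'' and call that the stated rate. That asserts something $N$ times weaker than the statement, and it would not serve its purpose. In Lemma~\ref{Lemme5}, Cauchy--Schwarz gives $q_{1N}\le\lambda^{-2}\frac{n}{N}\sum_j\mathbb{E}_p(R^2_{jN})$, and your version only yields $q_{1N}=O(h^{-2})$. The correct ending is $\frac{1}{N}\sum_j\mathbb{E}_p(R^2_{jN})=O\{(nh)^{-2}\}$, multiplied by $n$, giving $O\{1/(nh^2)\}$. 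There is no slack: the $(k,k,l,l)$ pattern contributes exactly order $N^{-4}h^{-4}(Nh)^2=(Nh)^{-2}$.

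There are also three smaller points.
\begin{itemize}
\item Lemma~\ref{Lemme2}(i) controls $\frac{1}{N}\sum_j(\nu_j/2Nh)^p$, where $\nu_j=\sum_{k\in U}\mathds{1}_{\{|x_k-x_j|\le h\}}$, not $\max_j\nu_j$. Your pattern bounds therefore hold after averaging over $j$, not ``uniformly in $j$''; averaging is all that is needed.
\item You omit the $(k,k,k,l)$ pattern. It is harmless: $|\mathbb{E}_p(\cdot)|\le 1$ gives $O\{(Nh)^{-2}\}$ after averaging.
\item Your fallback for kernels not supported on $[x_j-h,x_j+h]$ does not work as stated. With only $\|K_{x_j,h}\|_\infty\lesssim h^{-1}$ and no window indicator, $\sum_k|a_k|=O(N/h)$ instead of $O(N)$. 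The all-distinct pattern then becomes $O(h^{-4}n^{-2})$, well beyond $(nh)^{-2}$. You would need averaged control of $N^{-1}\sum_k K_{x_j,h}(x_k)$ and $N^{-1}\sum_k K^2_{x_j,h}(x_k)$, which the integral bound \ref{A4} does not supply for fixed design points. The paper also relies on the window indicator, so this matters only if you want the gamma kernel genuinely covered.
\end{itemize}
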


\begin{proof} The proof follows the same procedure as in \citet[Lemma 3]{breidt2000local}, using the associated kernels instead of the classical kernels. Let $Q_{N} = \frac{1}{N} \sum_{j \in U}^{} \mathbb{E}_{p} \left| \widehat{\overline{y}}_{jg} - \overline{y}_{jg} \right|^{4}.$ Note that
\begin{align*}
Q_{N} & = \frac{1}{N} \sum_{j \in U}^{} \mathbb{E}_{p} \left| \frac{1}{N} \sum_{k \in U}^{} K_{x_{j},h} \left(x_{k}\right) z^{+}_{jgk} \left(1-\frac{\mathds{1}_k}{\pi_k} \right) \right|^{4}\\
& = \frac{1}{N^{5}}  \mathbb{E}_{p} \left| \sum_{j,k,l,m,n \in U}^{} K_{x_{j},h} \left(x_{k}\right) z^{ +}_{jgk} \left(1-\frac{\mathds{1}_k}{\pi_k} \right) \times K_{x_{j},h} \left(x_{l}\right) z^{ +}_{jgl} \left(1-\frac{\mathds{1}_l}{\pi_l} \right) \right.\\
& \  \  \  \times \left. K_{x_{j},h} \left(x_{m}\right) z^{ +}_{jgm} \left(1-\frac{\mathds{1}_m}{\pi_m} \right) \times K_{x_{j},h} \left(x_{n}\right) z^{ +}_{jgn} \left(1-\frac{\mathds{1}_n}{\pi_n} \right) \right| \\
& \leq \frac{c}{N^{5} h^{4}} \sum_{j, k, l, m, n \in U}^{} \mathds{1}_{\left\{x_{j}-h \leq x_{k},x_{l},x_{m},x_{n} \leq x_{j}+h \right\}} \\
& \ \ \  \times  \mathbb{E}_{p}\left| \left\{ \left(\mathds{1}_{k} - \pi_{k} \right)\left(\mathds{1}_{l} - \pi_{l} \right)\left(\mathds{1}_{m} - \pi_{m} \right) \left(\mathds{1}_{n} - \pi_{n} \right) \right\} \right|, 
\end{align*}
which remains bounded by \textnormal{\ref{B5}}, \textnormal{\ref{B7}} and Lemma \ref{Lemme2} $\left( i \right)$. Based on the same arguments as \citet[Lemma 3]{breidt2000local}, we recover the result. 
\end{proof}

\begin{lemma}\label{Lemme4}
Under Assumptions~\textnormal{\ref{B1}-\ref{B7}}, then \[ \lim_{N\longrightarrow \infty}^{} \frac{1}{N} \sum_{j \in U} \mathbb{E}_{p} \left( \widehat{m}^{1}_{j} - m_{j}\right)^{2} = 0. \]
\end{lemma}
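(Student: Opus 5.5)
The plan follows \citet[Lemma~4]{breidt2000local}. Start from the decomposition \eqref{Rj}:
\[
\widehat{m}^1_j - m_j = \frac{1}{N}\sum_{k \in U} z_{jk}\left(\frac{\mathds{1}_k}{\pi_k}-1\right) + \frac{\partial \widehat{m}^1_j}{\partial \delta}\bigg|_{\widehat{\overline{y}}_j=\overline{y}_j,\,\delta=0}\frac{\delta}{N^2} + R_{jN}.
\]
Apply $(a+b+c)^2 \le 3(a^2+b^2+c^2)$ and average over $j\in U$. It then suffices to show that each of the three contributions $\frac1N\sum_{j}\mathbb{E}_p(\cdot)^2$ tends to zero.

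\textbf{Deterministic ridge term.} By Lemma~\ref{Lemme2}~(iv), the derivative with respect to $\delta$ at $\delta=0$ is uniformly bounded in $j$. This contribution is therefore $O(N^{-4})$ uniformly in $j$, and its average vanishes.

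\textbf{Remainder term.} By Lemma~\ref{Lemme3}, $\frac{n}{N}\sum_{j}\mathbb{E}_p(R_{jN}^2)=O\{(nh^2)^{-1}\}$. Hence
\[
\frac1N\sum_j\mathbb{E}_p(R_{jN}^2)=O\{(n^2h^2)^{-1}\},
\]
which tends to zero whenever $nh\to\infty$. I would read this rate condition as implicit in \ref{B5}, in the same spirit as \ref{A5}. If needed, I would fall back on combining uniform boundedness of $R_{jN}^2$ (Lemma~\ref{Lemme2}~(v)) with the convergence in probability implied by the moment bound.

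\textbf{Linear term (main obstacle).} Write $z_{jk}=\sum_{g=1}^5 a_{jg}K_{x_j,h}(x_k)z^+_{jgk}$, where the partial derivatives $a_{jg}$ are uniformly bounded by Lemma~\ref{Lemme2}~(iv). Then
\[
\mathbb{E}_p\Bigl\{\frac1N\sum_k z_{jk}(\mathds{1}_k/\pi_k-1)\Bigr\}^2 = \frac{1}{N^2}\sum_{k,l} z_{jk}z_{jl}\frac{\pi_{kl}-\pi_k\pi_l}{\pi_k\pi_l}.
\]
I would split this into the diagonal part $k=l$ and the off-diagonal part $k\neq l$, and bound them using \ref{B6}: $\pi_k\ge\lambda$ and $n\max_{k\ne l}|\pi_{kl}-\pi_k\pi_l|$ bounded.

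Here the kernel enters. For the associated kernel I need $K_{x_j,h}(x_k)\le c\,h^{-r}$ together with an effective localisation. For the compactly supported symmetric kernels this is the indicator $\mathds{1}\{|x_k-x_j|\le h\}$. For asymmetric kernels such as the gamma kernel I would instead use a tail bound derived from $B(x_j,h)\to0$ via Chebyshev, controlling the mass outside a shrinking window.

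With this bound, the diagonal part is at most
\[
\frac{c}{N\lambda}\cdot\frac{1}{N^2h^2}\sum_k\mathds{1}\{|x_k-x_j|\le h\}\,(\cdot).
\]
After averaging over $j$, Lemma~\ref{Lemme2}~(i) gives $O\{(Nh)^{-1}\}$. The off-diagonal part is $O(n^{-1})$ times $\frac1N\sum_j(\frac{1}{Nh}\sum_k\mathds{1}\{\cdot\})^2$, which is bounded by Lemma~\ref{Lemme2}~(i) with $k=2$, so it too vanishes. Compact support of the errors (\ref{B1}) keeps the $y_k$ factors in $z^+_{jgk}$ bounded.

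The main difficulty I expect is justifying this localisation uniformly in $j$ for non-compact asymmetric kernels. I would first try to treat the tail contribution separately: bound it by $\sup K$ times the kernel's tail mass, and show it is negligible after multiplying by $N^{-1}$ factors.
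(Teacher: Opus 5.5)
Your plan is essentially the paper's proof: the same expansion \eqref{Rj}, Lemma~\ref{Lemme3} for the remainder, and the \ref{B6}-based diagonal/off-diagonal split of the design variance of the linear term, with only minor variations. These are the inequality $(a+b+c)^2\le 3(a^2+b^2+c^2)$ in place of Cauchy--Schwarz on the cross term, and an $O(n^{-1})$ off-diagonal bound via Lemma~\ref{Lemme2}(i) with $k=2$, which is slightly sharper than the paper's $\bigl(\sum_k|z_{jk}|\bigr)^2\le N\sum_k z_{jk}^2$. The rate $nh\to\infty$ you flag is genuinely needed and is equally implicit in the paper, but your Lemma~\ref{Lemme2}(v) fallback is circular, since the convergence in probability would come from that same moment bound; and for non-compact kernels, Chebyshev controls the kernel's integral mass rather than its values at the fixed design points, and no localization is needed anyway, because $N^{-2}\sum_k z_{jk}^2\le N^{-1}\max_k|z_{jk}|\cdot N^{-1}\sum_k|z_{jk}|$ with $\sup_t K_{x_j,h}(t)=O(h^{-1})$ (true for the gamma, Gaussian and Epanechnikov kernels) and the boundedness of $\overline{y}_{j1}=N^{-1}\sum_k K_{x_j,h}(x_k)$ asserted in Lemma~\ref{Lemme2}(iii) already gives $O\{(Nh)^{-1}\}$ for the diagonal part and $O(n^{-1})$ for the off-diagonal part.
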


\begin{proof}
 Following \citet[Lemma 4]{breidt2000local}, by \eqref{Rj}, 
\begin{align*}
 \widehat{m}^{1}_{j} - m_{j} & = R_{jN}+ \frac{1}{N} \sum_{k \in U}^{} \left\{ \sum_{g = 1}^{5}  \frac{\partial \widehat{m}^{1}_{j}}{\partial \left(\widehat{ \overline{y}}_{jg}\right)} \bigg|_{\widehat{ \overline{y}}_{j} =  \overline{y}_{j}, \delta = 0 }K_{x_{j},h} \left(x_{k}\right) z^{+}_{jgk} \right\} \left(\frac{\mathds{1}_k}{\pi_k} - 1\right) + o \left( \frac{\delta}{N^{2}} \right)\\
 & = R_{jN}+ \frac{1}{N} \sum_{k \in U}^{} z_{jk} \left(\frac{\mathds{1}_k}{\pi_k} - 1\right) + o \left( \frac{\delta}{N^{2}} \right),
\end{align*} then
 \begin{align} \label{m^j-mj}
 \frac{1}{N} \sum_{j \in U}^{} \mathbb{E}_{p}  \left(  \widehat{m}^{1}_{j} - m_{j} \right)^{2} & = \frac{1}{N} \sum_{j \in U}^{} \mathbb{E}_{p} \left( R^{2}_{jN} \right) + \frac{2}{N^{2}} \sum_{j \in U}^{} \sum_{k \in U}^{} z_{jk} \mathbb{E}_{p} \left\{ \left( \frac{\mathds{1}_k}{\pi_k} - 1 \right) R_{jN} \right\} \\
 & \  \  \  \ + \frac{1}{N^{3}} \sum_{j \in U}^{} \mathbb{E}_{p} \left[  \left\{  \sum_{k \in U}^{} z_{jk} \left( \frac{\mathds{1}_k}{\pi_k} - 1 \right) \right\}^{2} \right] + o \left( \frac{\delta}{N^{2}} \right), \nonumber
 \end{align} where 
 \begin{align*}
  \frac{1}{N^{3}} \sum_{j \in U}^{} \mathbb{E}_{p} \left[  \left\{  \sum_{k \in U}^{} z_{jk} \left( \frac{\mathds{1}_k}{\pi_k} - 1 \right) \right\}^{2} \right]  & =  \frac{1}{N^{3}} \sum_{j \in U}^{}\sum_{k \in U}^{} z^{2}_{jk} \frac{1-\pi_{k}}{\pi_{k}} \\
  & \ \ \ \ \ + \frac{1}{N^{3}} \sum_{j \in U}^{}\sum_{{k,l \in U}_{k \neq l}}^{} z_{jk} z_{jl} \frac{\pi_{kl}-\pi_{k}\pi_{l}}{\pi_{k} \pi_{l}} \\
  &  \leq  \frac{1}{\lambda N^{3}} \sum_{j,k \in U}^{} z^{2}_{jk}\\
  & \ \ \ \ \  + \frac{N \max_{{i,j \in U}_{i \neq j}}^{} \left| \pi_{ij}-\pi_{i}\pi_{j} \right|}{\lambda^{2} N^{3}}\sum_{j,k \in U}^{} z^{2}_{jk}
 \end{align*}
tend to $0$ by \textnormal{\ref{B6}}, and following \citet[ Lemma 1]{breidt2000local} and Lemma \ref{Lemme2} $\left( v \right).$ The first term in \eqref{m^j-mj} converges to $0$ when considering Lemma \ref{Lemme3}. The second term converges to $0$ by the Cauchy-Schwarz inequality. 
\end{proof}

\begin{lemma}\label{Lemme5}
Under Assumptions~\textnormal{\ref{B1}-\ref{B7}}, then
\begin{equation*}
\lim_{N \longrightarrow \infty}^{} \frac{n}{N^{2}} \mathbb{E}_{p} \left[ \sum_{i,j \in U}^{} \left( \widehat{m}^{1}_{i}-m_{i} \right) \left( \widehat{m}^{1}_{j}-m_{j} \right) \left( 1-\frac{\mathds{1}_i}{\pi_i} \right) \left( 1-\frac{\mathds{1}_j}{\pi_j} \right) \right] = 0.
\end{equation*}
\end{lemma}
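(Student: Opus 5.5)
The plan follows \citet[Lemma~5]{breidt2000local}: insert the linearization \eqref{Rj} of $\widehat{m}^1_j - m_j$ into the double sum, so that it splits into a linear part, a remainder part and a cross part. Write $a_i = 1-\mathds{1}_i/\pi_i$. Then
\[
\widehat{m}^1_j - m_j = -\frac{1}{N}\sum_{k\in U} z_{jk}\,a_k + R_{jN} + c_j\frac{\delta}{N^2},
\]
where $c_j$ is the $\delta$-derivative evaluated at $\widehat{\overline{y}}_j=\overline{y}_j$, $\delta=0$. By Lemma~\ref{Lemme2}(iv), $c_j$ is uniformly bounded, so the ridge term contributes only $O(n\delta^2/N^4)$ and, through Cauchy--Schwarz, cross terms of smaller order. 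The quantity to control is therefore
\[
\frac{n}{N^2}\,\mathbb{E}_p\Bigl\{\sum_{j\in U}(\widehat{m}^1_j-m_j)\,a_j\Bigr\}^2 .
\]
By $(u+v)^2\le 2u^2+2v^2$, it is enough to show that each of the following two terms is $o(1)$:
\[
T_1=\frac{n}{N^2}\,\mathbb{E}_p\Bigl(\sum_{j}R_{jN}a_j\Bigr)^2,\qquad
T_2=\frac{n}{N^4}\,\mathbb{E}_p\Bigl(\sum_{j,k}z_{jk}a_ja_k\Bigr)^2 .
\]

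For $T_1$, apply Cauchy--Schwarz to get
\[
T_1\le \frac{n}{N}\sum_j \mathbb{E}_p\bigl(R_{jN}^2a_j^2\bigr).
\]
Since $|a_j|\le 1/\lambda$ by Assumption~\ref{B6}, this gives $T_1\le \lambda^{-2}\,\frac{n}{N}\sum_j\mathbb{E}_p(R_{jN}^2)$, which is $O\{1/(nh^2)\}$ by Lemma~\ref{Lemme3}. This tends to zero under the bandwidth rate implicit in \ref{B5} (with $nh^{r_2}\to\infty$ as in \ref{A5}; for $r_2\le 2$ this requires $nh^2\to\infty$, which I would add explicitly if needed).

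For $T_2$, expand the square into the quadruple sum
\[
\frac{n}{N^4}\sum_{j,k,l,m} z_{jk}z_{lm}\,\mathbb{E}_p(a_ja_ka_la_m).
\]
I will split the index tuples by their coincidence pattern: all four distinct, exactly three distinct, two distinct, and one. For all indices distinct, the first condition of \ref{B7} gives $|\mathbb{E}_p(a_ja_ka_la_m)|=O(n^{-2})$. For three distinct indices, the third condition gives $O(n^{-1})$. The remaining patterns are bounded by constants via \ref{B6}.

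Next I bound the sums of $|z_{jk}z_{lm}|$. By construction $|z_{jk}|\le C\,K_{x_j,h}(x_k)$, with $C$ uniform by Lemma~\ref{Lemme2}(iv) and Assumption~\ref{B1}. Together with Lemma~\ref{Lemme2}(i), this gives $\sum_k |z_{jk}| = O(N)$ uniformly in $j$, and $\frac{1}{N}\sum_{j,k} z_{jk}^2 = O(N/h)$. Counting indices, the distinct-index part is then $\frac{n}{N^4}\cdot O(N^4)\cdot O(n^{-2}) = O(1/n)$. The three-distinct part is bounded by $\frac{n}{N^4}\,O(N^3/h)\,O(n^{-1})=O\{1/(Nh)\}$. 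The remaining patterns give $O\{n/(N^2h^2)\}$. All of these tend to zero because $n/N\to\pi$ by \ref{B5} and $Nh\to\infty$.

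The cross term between the $R$ part and the $z$ part is handled by Cauchy--Schwarz, as $O\{(T_1T_2)^{1/2}\}=o(1)$.

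The main obstacle is the distinct-index block of $T_2$. The naive bound $O(1/n)$ relies on $\sum_k|z_{jk}|=O(N)$, which for an asymmetric associated kernel is not automatic: the kernel is not of the form $h^{-1}K\{(x-x_j)/h\}$, and its support and scale vary with $x_j$. My first step will be to prove a uniform Riemann-sum bound, $\frac{1}{N}\sum_k K_{x_j,h}(x_k)=\int K_{x_j,h}f+o(1)\le \sup f$, using the compactness of $\mathcal I$ in \ref{B2} and the fact that $K_{x_j,h}$ is a density. This step replaces the indicator counting of Lemma~\ref{Lemme2}(i). For the $K^2$ sums I will use \ref{A4}, which gives $\|K_{x_j,h}\|_2^2\le c_2h^{-r_2}$, so $1/h$ is replaced by $h^{-r_2}$ throughout.
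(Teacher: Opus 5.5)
Your proposal is correct and is essentially the paper's argument, which itself follows \citet[Lemma~5]{breidt2000local}. Both split the quantity into three parts. The remainder part is controlled by Lemma~\ref{Lemme3} together with \ref{B6}. The cross part is handled by Cauchy--Schwarz. The linear $z$-part is controlled by sorting the index quadruples by coincidence pattern and applying \ref{B6}--\ref{B7}. Your Riemann-sum and \ref{A4} step fills in a point the paper glosses over for asymmetric kernels.

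One small correction: your crude bound $O\{n/(N^2h^2)\}$ for the two-distinct-index block requires $Nh^2\to\infty$, not merely $Nh\to\infty$. This follows from the rate $nh^2\to\infty$, which you rightly note is missing from \ref{B5} and which the paper's own appeal to Lemma~\ref{Lemme3} also silently requires.
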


\begin{proof} Following \citet[Lemma 5]{breidt2000local}, by  \eqref{Rj},
\begin{align*}
  &  \frac{n}{N^{2}} \mathbb{E}_{p} \left[ \sum_{i,j \in U}^{} \left( \widehat{m}^{1}_{i}-m_{i} \right) \left( \widehat{m}^{1}_{j}-m_{j} \right) \left( 1- \frac{\mathds{1}_i}{\pi_i} \right) \left( 1-\frac{\mathds{1}_j}{\pi_j} \right) \right] \\
  & = \frac{n}{N^{2}} \sum_{i,j \in U}^{} \mathbb{E}_{p} \left\{ R_{iN}R_{jN} \left( 1-\frac{\mathds{1}_i}{\pi_i} \right) \left( 1-\frac{\mathds{1}_j}{\pi_j} \right) \right\} \\
 & \  \  \  \ + \frac{2n}{N^{3}}  \sum_{i,j,k \in U}^{} z_{jk} \mathbb{E}_{p} \left\{ R_{jN} \left( 1-\frac{\mathds{1}_i}{\pi_i} \right) \left( 1-\frac{\mathds{1}_j}{\pi_j} \right)   \left( 1-\frac{\mathds{1}_k}{\pi_k} \right) \right\} \\
&  \ \ \  \ + \frac{n}{N^{4}}  \sum_{i,j,k,l \in U}^{} z_{jk}z_{jl} \mathbb{E}_{p} \left\{ \left( 1-\frac{\mathds{1}_i}{\pi_i} \right) \left( 1-\frac{\mathds{1}_j}{\pi_j} \right) \left( 1-\frac{\mathds{1}_k}{\pi_k} \right)\left( 1-\frac{\mathds{1}_l}{\pi_l} \right) \right\} + o\left(1 \right) \\
  & = q_{1N}+q_{2N}+q_{3N}+ o\left(1 \right).
\end{align*}
The term $q_{1N}$ converges to $0$ by Lemma \ref{Lemme3} and \textnormal{\ref{B6}}. The term $q_{2N}$ converges to $0$ by the Cauchy-Schwarz inequality. By separating the elements for $\left( i,\ j,\ k,\ l \right)$ as in Lemma \ref{Lemme3}, $q_{3N}$ also converges to 0, and thus the result is proved.
\end{proof}


\end{document}